\newtheorem{thm}{Theorem}[section]
\newtheorem{prop}{Proposition}[section]
\newtheorem{lem}{Lemma}[section]
\newtheorem{cor}{Corollary}[section]
\newtheorem{rem}{Remark}[section]
\title{Perturbation of a warped product metric of an end and the growth property of solutions to eigenvalue equation${}^{*}$}
\author{Hironori Kumura${}^{**}$}
\email{smhkumu@ipc.shizuoka.ac.jp} 
\address{Department of Mathematics, Shizuoka University, 
Shizuoka 422-8529, Japan} 
\thanks{${}^{*}$\
2010 {\it Mathematics Subject Classification}. Primary 58J50; Secondary 47A75.\\
{\it Key Words and Phrases} Laplace-Beltrami operator, eigenvalue}
\thanks{${}^{**}$\ 
supported in part by the Grants-in-Aid for Scientific Research (C), 
Japan Society for the Promotion of Science, No.~21540215.} 
\date{}
\begin{document}

\maketitle

\begin{abstract} 
In this paper, we shall study Riemannian metrics around a warped product one, and derive growth estimates of solutions to the eigenvalue equation, from which the absence of eigenvalues will follows. 
\end{abstract}
\section{Introduction}

The Laplace-Beltrami operator $\Delta$ on a noncompact complete Riemannian manifold $(M,g)$ is self-adjoit on $L^2(M)$ and it has been studied by several authors from various points of view. 
Especially, the problem of the absence of eigenvalues was discussed in $[6-11]$, $[14]$, $[15]$, $[19-21]$, $[23]$, $[25]$, and so on. 
In particular, using Carleman inequalities, Donnelly \cite{D4} proved the non-existence of positive eigenvalues under the assumption that $(M,g)$ admits an exhaustion function $b$ satisfying $| \nabla db - b^{-1} (g - db \otimes db)| \le c_1 r^{-1-\varepsilon}$, $\left| |\nabla b|-1 \right| \le c_2 r^{-\varepsilon}$, $c_3 r \le b \le c_4 r$ in the complement of a compact subset of $M$; his result can be seen to be applied for ALE spaces by the work of Bando-Kasue-Nakajima \cite{B-K-N}. 
Moreover, Donnelly \cite{D5} used the Mourre theory and proved the non-existence of positive eigenvalues under the assumption that $(M,g)$ admits an exhaustion function $b$ satisfying $| \nabla db - b^{-1} (g - db \otimes db)| \le c_1 r^{-1-\varepsilon(r)}$, $1 - \varepsilon(r) \le |\nabla b| \le 1 + \varepsilon(r)$, $c_1 r \le b \le c_2 r$, $| d\Delta b^2 | \le \varepsilon(r)$ in the complement of a compact subset of $M$, where $\varepsilon(r)$ stands for a positive valued function satisfying $\lim_{r \to \infty} \varepsilon(r)=0$; his result can be seen to be applicable for manifolds with non-negative Ricci curvature, Euclidean volume growth, and quadratic curvature decay by the works due to Cheeger-Colding \cite{C-C} and Colding-Minicozzi \cite{C-M}. 
On the other hand, the author \cite{K4} proved the non-existence of eigenvalues under the assumption that $(M,g)$ has an end $E$ with radial coordinates and $-b(b-1)r^{-2} \le {\rm radial~curvature} \le a(a-1)r^{-2}$ on $E$, where $a$ and $b$ are constants satisfying $\max\{ \frac{n-1}{n+1}b, b-\frac{2}{n-1} \} < a \le b$ and $\dim M=n$. 
The purpose of this paper is to investigate these problems further; in this paper, we claim that the lower bound of radial curvatures can be replace by $-o(r^{-1})$ if the metric is close to warped product metric. 
Our method is a modification of solutions of Kato \cite{Kato}, Eidus \cite{E}, Roze \cite{R}, and Mochizuki \cite{M}, to the analogous problem for the Schr\"odinger equation on Euclidian space; in particular, we shall use integration-by-parts arguments. 

We shall introduce some terminology and notations to state our results. 
Let $(M,g)$ be an $n$-dimensional noncompact complete Riemannian manifold and $U$ an open subset of $M$. 
We shall say that $E:=M-U$ is an {\it end with radial coordinates} if and only if the boundary $\partial U$ is compact, connected, and smooth, and the outward normal exponential map $\exp_{\partial U}^{\perp} : N^{+}(\partial U) \to M - \overline{U}$ induces a diffeomorphism, where $N^{+}(\partial U) = \{ v\in T(\partial U) \mid v {\rm ~is~outward~normal~to~}\partial U \}$. 
Note that $U$ is not necessarily relatively compact. 
Let $r$ denote the distance function from $\partial U= \partial E$ defined on the end $E$. 
We shall say that a $2$-plane $\pi \subset T_xM$ $(x\in E)$ is {\it radial} if $\pi$ contains $\nabla r$, and, by the {\it radial curvature}, we mean the restriction of the sectional curvature to all the radial planes. 
In the sequel, the following notations will be used:
\begin{align*}
 & B(s,t) = \{ x\in E \mid s < r(x) < t \} \quad 
            \mathrm{for}~~0 \le s < t ;\\
 & B(s,\infty) = \{ x\in E \mid s<r(x) \} \quad 
            \mathrm{for}~~0 \le s <\infty;\\
 & S(t) = \{ x\in E \mid r(x)=t \} \quad 
            \mathrm{for}~~0 \le t < \infty ;\\
 & \sigma(-\Delta) =  \,{\rm the~spectrum~of}~-\Delta;\\
 & \sigma_{\rm pp}(-\Delta) =  \,{\rm the~set~of~all~eigenvalues~of}~
   - \Delta;\\
 & K_{{\rm rad.}} =  \,{\rm the~radial~curvature~on~}E;\\
 & \widetilde{g} = g - dr \otimes dr \quad {\rm on}~E.
\end{align*}
Moreover, we denote the Riemannian measure of $(M,g)$ by $dv_g$, and the induced measures from $dv_g$ on each $S(t)~( t > 0 )$ simply by $dA$. 

We shall state our theorems; Theorem $1.1$ and $1.2$ are concerned with warped product case:
\begin{thm}
Let $(M,g)$ be an $n$-dimensional noncompact complete Riemannian manifold and $E$ be an end of $M$ with radial coordinates. 
Assume that the metric $g|_E$ restricted on $E$ has the following form:
\begin{align*}
  g|_E = dr^2 + f(r)^2 g_{\partial E},
\end{align*}
where $r = {\rm dist}\,(\partial E,*)$ on $E$, and $g_{\partial E}$ stands for the induced metric on $\partial E$. 
We denote $A_f(r) = \frac{f'(r)}{f(r)}$ and assume that there exist constants $r_0 > 0$ and $a > 0$ such that 
\begin{align*}
 & A_f(r_0)
  \begin{cases}
    \ge \displaystyle \frac{a}{r_0} & \text{if} \quad 0 < a < 1,\vspace{1mm} \\
    > 0   & \text{if} \quad a \ge 1;
  \end{cases}\\
 & - \frac{\varepsilon(r)}{r} 
   \le K_{{\rm rad.}} 
   \le \frac{a(1-a)}{r^2} \qquad \mathrm{on}~~B(r_0,\infty),
\end{align*}
where $\varepsilon (t)$ is a positive-valued function of $t \in [r_0,\infty)$ satisfying $ \lim_{t \to \infty} \varepsilon (t) = 0 $. 
Then, $\sigma (-\Delta) = \left[0,\infty \right)$ and $-\Delta $ has no eigenvalue. 
\end{thm}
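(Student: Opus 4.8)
The plan is to verify, on a suitable end $B(r_1,\infty)$, the hypotheses of the warped-product absence result (Theorem $1.3$), and then to invoke it directly. For the warped product $g|_E = dr^2 + h(r)^2 g_{\partial U}$ one has $\nabla dr = A_h(r)\,\widetilde g$ and the radial sectional curvature equals $K_h(r)$, so the curvature hypothesis of the statement reads $-\varepsilon(r)/r \le K_h(r) \le a(1-a)/r^2$; in particular $|K_h(r)| \le \max\{\varepsilon(r)/r,\,|a(1-a)|/r^2\} = o(1/r)$, which is one of the two requirements of Theorem $1.3$. It therefore remains to produce constants $r_1 \ge r_0$ and $A_0 > 0$ together with a positive function $\varepsilon_1$ with $\lim_{t\to\infty}\varepsilon_1(t)=0$ such that
\begin{align*}
  \frac{A_0}{r} \le A_h(r) \le \frac{\varepsilon_1(r)}{\sqrt r} \qquad \text{on } B(r_1,\infty).
\end{align*}
Both bounds will come from the comparison theorem in Riemannian geometry (Kasue; see Proposition $2.1$), applied to the Riccati equation $A_h' = -A_h^2 - K_h$ satisfied by $A_h = h'/h$.

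For the lower bound I would compare with the exact model $h(r) = r^a$, for which $A = a/r$ and the radial curvature equals $a(1-a)/r^2$, matching the upper curvature bound. When $0 < a < 1$ this model curvature is positive, so the comparison is of the delicate, ``unstable'' type: writing $\phi := A_h - a/r$, the Riccati equation together with $K_h \le a(1-a)/r^2$ gives $\phi' \ge -(A_h + a/r)\,\phi$, so $\phi$ cannot pass from nonnegative to negative values; the hypothesis $A_h(r_0) \ge a/r_0$ guarantees $\phi(r_0)\ge 0$ and hence $A_h(r) \ge a/r$, i.e. $A_0 = a$. When $a \ge 1$ the model curvature $a(1-a)/r^2$ is nonpositive, whence $h'' = -K_h h \ge 0$ and $h$ is convex; since $A_h(r_0) = h'(r_0)/h(r_0) > 0$, one has $h'(r) \ge h'(r_0) > 0$ and $h(r) \le h(r_0) + h'(r)(r-r_0)$, which yields $A_h(r) \ge 1/(r + C)$ and hence $A_h(r) \ge A_0/r$ for any fixed $A_0 < 1$ once $r_1$ is large enough. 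This case distinction is precisely why the two initial-value hypotheses on $A_h(r_0)$ are imposed.

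For the upper bound I would use only the lower curvature bound $K_h \ge -\varepsilon(r)/r$. By the same comparison theorem, $A_h$ is dominated by the logarithmic derivative of the solution of $h'' = (\varepsilon(r)/r)\,h$; since its equilibrium behaviour is $A \sim \sqrt{\varepsilon(r)/r} = \sqrt{\varepsilon(r)}/\sqrt r$ and $\sqrt{\varepsilon(r)} \to 0$, one obtains $A_h(r) \le \varepsilon_1(r)/\sqrt r$ with $\varepsilon_1(r) \to 0$ after enlarging $r_1$ if necessary. With both bounds on $A_h$ and with $|K_h| = o(1/r)$ in hand, all hypotheses of Theorem $1.3$ hold on $B(r_1,\infty)$, and that theorem yields the absence of eigenvalues. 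I expect the genuinely hard step to be the lower bound on $A_h$ in the positive-curvature range $0 < a < 1$, where the sign of the model curvature makes the comparison unstable and forces the quantitative initial condition $A_h(r_0)\ge a/r_0$; extracting the sharp $o(1/\sqrt r)$ decay of the upper bound from the comparatively weak hypothesis $K_h \ge -\varepsilon(r)/r$ is the second, more technical, point.
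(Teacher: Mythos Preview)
Your approach is correct and matches the paper's: the paper states that Theorem~1.4 follows from Theorem~1.2 ``in view of the comparison theorem in Riemannian geometry (see Kasue, and Proposition~2.1)''---that is, one uses the Riccati comparison encoded in Proposition~2.1 to convert the curvature bounds on $K_h$ and the initial condition on $A_h(r_0)$ into the two-sided estimate $a/r + O(1/r^2) \le A_h(r) \le \varepsilon_1(r)/\sqrt{r}$, exactly as you do. The only cosmetic difference is that you then invoke Theorem~1.3 (the warped-product form) rather than Theorem~1.2; since in the warped-product case $\nabla dr = A_h(r)\,\widetilde g$ and $\mathrm{Ric}(\nabla r,\nabla r)=(n-1)K_h(r)$, Theorem~1.3 is precisely the specialization one needs, so your choice is in fact the cleaner route.
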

Note that the shape operator $A_f(r)$ of level hypersurfaces $S(r)$ of $r$ is positive definite for any $t$ in $[r_0,\infty)$, which implies that $E$ is an expanding end. 
If the end $E$ of an rotationally symmetric Riemannian manifold shrinks at infinity, infinitely many eigenvalues appear; also, if the end $E$ is asymptotically cylinder, that is, $\lim_{r \to \infty} f(r) >0$ and $f$ is not constant near infinity, then infinitely many eigenvalues will appear. 

Note also that we can take $f(r)=r^p~(p>0)$ and $f(r)=\exp r^p~(0<p<\frac{1}{2})$ in Theorem $1.1$ and Theorem $1.3$ which is stated later, and hence, fairly wider classes of model manifolds than Euclidean space case (that is the case that $f(r)=r$) are applicable. 

The decay order $ - \frac{\varepsilon(r)}{r} $ of the lower bound of radial curvatures in Theorem $1.1$ seems to be sharp; it is interesting to compare $ - \frac{\varepsilon(r)}{r} $ and the radial curvature $2k \frac{\cos 2r}{r}$ in the following:
\begin{thm}
There exists a rotationally symmetric manifold 
$(M,g) = \bigl( {\bf R}^n, dr^2 + f^2(r) g_{S^{n-1}(1)} \bigr)$ with the following three properties$:$
\begin{enumerate}[$(i)$]
  \item $\nabla dr 
= \displaystyle\left( 2k \frac{\cos 2r}{r} + O(r^{-1-\alpha}) \right) \widetilde{g}$ as $r\to \infty$, and hence, $\sigma (-\Delta) = \left[0,\infty \right)$ $;$   
  \item $\sigma_{\rm pp} ( - \Delta ) = \{ 1 \}$$;$
  \item $\displaystyle K_{{\rm rad.}} = -2k \frac{\cos 2r}{r} + O(r^{-1-\alpha}) $ as $r\to \infty$.
\end{enumerate}
Here, $k$ and $\alpha$ are constants satisfying $(n-1)|k|>2$ and $\frac{1}{2}<\alpha<1$. 
\end{thm}
By using the comparison theorem in Riemannian geometry (Kasue \cite{Kasue} is a good reference for the comparison theorem in Riemannian geometry), Theorem $1.1$ follows from the following Theorem $1.3$ which we shall now explain. 

Let us consider `$1/r$-perturbation' of shape operators of level hypersurfaces of $r$ as in the inequality $(1)$ below: let $(M,g)$ be an $n$-dimensional noncompact complete Riemannian manifold and $E$ an end of $M$ with radial coordinates, and denote $r = {\rm dist}(\partial E,*)$ on $E$. 
Assume that there exists $r_0 > 0$ such that 
\begin{align}
  \left\{ \frac{f'(r)}{f(r)} - \frac{a}{r} \right\}  \,\widetilde{g} 
  \le \nabla dr 
  \le \left\{ \frac{f'(r)}{f(r)} + \frac{b}{r} \right\}  \,\widetilde{g} 
  \qquad {\rm on}~~B(r_0,\infty), 
\end{align}
where $f(t)$ is a positive-valued $C^{\infty}$ function of $t\in [r_0,\infty)$, and $a$ and $b$ are positive constants. 
In the sequel, we shall often use the following notations for simplicity:
\begin{align*}
 & \widehat{a} = ( n - 1 ) a \qquad {\rm for~a~constant}~a>0;\\
 & A(r) = \frac{f'(r)}{f(r)};~K(r) = - \frac{f''(r)}{f(r)};\\
 & \widehat{A}(r) = (n-1) A(r);~\widehat{K}(r) = (n-1)K(r).
\end{align*} 
For the sake of convenience, we shall list the assumptions used in this paper:
\begin{align}
 & \frac{A_0}{r} \le A(r) \le \frac{ B_0 }{ \sqrt{r} } 
   \qquad {\rm for}~~r \ge r_0 ; \\
 & 2( A_0 - a ) > \widehat{a} + \widehat{b} ; \\
 & \left| r \widehat{A}\,'(r) \right| 
   = \left| - r \left( \widehat{A}(r)A(r) + \widehat{K} (r) \right) \right| 
   \le \widehat{K}_3 \qquad {\rm for}~~r \ge r_0 ; \\
 & {\rm Ric}_g (\nabla r, \nabla r) 
   \ge  - \frac{ \widehat{b}_1 }{r} \qquad {\rm on}~~B(r_0,\infty), 
\end{align}
where $A_0$, $B_0$, $b_1$, and $K_3$ are positive constants. \\
Moreover, for $\gamma > \frac{ \widehat{a} + \widehat{b} }{2}$, we denote 
\begin{align*}
  & \lambda_1(\gamma,a,b,A_0,B_0,b_1,K_3,b_1)\\
= & \max \left\{ 
  \frac{(\widehat{K}_3)^2}{( y_1 -\widehat{a} )(2\gamma - \widehat{b}- y_1 )}, 
  \left( \frac{ 4\widehat{b}_1 + (\widehat{B}_0)^2 }
  { 8\left( 2(A_0 - a) - \widehat{a} - \widehat{b} \right) } 
  \right)^2 \right\},
\end{align*}
where $y_1= \min \left\{ \frac{2\gamma + \widehat{a} - \widehat{b}}{2}, 2(A_0-a) - \widehat{b} \right\}$. 
Note that $\lambda_1(\gamma,a,b,A_0,B_0,K_3,b_1)$ converges to zero as $K_3$, $b_1$, and $B_0$ tend to zero. \\
Note also that $(1)$ and $(2)$ imply that $\lim_{r\to \infty}\Delta r=0$, and hence, $\sigma(-\Delta )=[0,\infty)$. 
\begin{thm}
Let $(M,g)$ be an $n$-dimensional noncompact complete Riemannian manifold and $E$ an end of $M$ with radial coordinates. 
We denote $r = {\rm dist}(\partial E,*)$ on $E$. 
Assume that there exist a positive-valued $C^{\infty}$ function $f(t)$ of $t \in [r_0,\infty)$ and positive constants $r_0$, $a$, $b$, $A_0$, $B_0$, $b_1$, and $K_3$ such that $(1)$, $(2)$, $(3)$, $(4)$, and $(5)$ hold. 
Let $\lambda > 0$ be a constant and $u$ a solution to 
\begin{align*}
   \Delta u + \lambda u = 0 \qquad {\rm on}~~B(r_0,\infty).
\end{align*}
Let 
\begin{align*}
  \gamma > \frac{ \widehat{a} + \widehat{b} }{2}  
\end{align*} 
be a constant and assume that $u$ satisfies the condition$:$
\begin{align*}
  \liminf_{t\to \infty} ~ t^{\gamma} \int_{S(t)}
  \left\{ \left( \frac{\partial u}{\partial r} \right)^2 + |u|^2 
  \right\} \,dA=0.
\end{align*}
Moreover, we assume that 
\begin{align*}
  \lambda > \lambda_1( \gamma, a, b, A_0, B_0, K_3 ).
\end{align*}
Then, $u \equiv 0$ on $B(r_0,\infty)$. 

In particular, if $ 2 > \widehat{a} + \widehat{b} $, then $\sigma_{{\rm pp}}(-\Delta) \cap \bigl( \lambda_1(1, a, b, A_0, B_0, K_3), \infty \bigr) =\emptyset$. \end{thm}
\vspace{3mm}

\noindent
{\bf Acknowledgement}~~The author should like to express his gratitude to a referee for his many appropriate comments and careful readings. 
\section{Proof of Theorem $1.2$}

In this section, we shall prove Theorem $1.2$. 

First, recall the following theorem essentially due to Atkinson \cite{A}; for the proof of Lemma $2.1$, see Arai-Uchiyama \cite{A-U} and references there. 
\begin{lem}
Let $\lambda > 0 $, $\varepsilon > 0 $, and $k \in {\bf R}$  be constants and 
$q(x) \in C^0[0,\infty)$ a real-valued function. 
Assume that 
\begin{align*}
  q(x) = - k \, \frac{\sin 2x}{x} + O \left( x^{-1-\varepsilon} \right)
  \qquad ( x \to \infty)
\end{align*}
and consider the eigenvalue equation
\begin{align}
  \left( - \frac{d^2}{dx^2} + q(x) \right) w (x) = \lambda w(x) 
  \qquad {\rm on}~~[0,\infty).
\end{align}
Then, the following properties $(a)$ and $(b)$ are equivalent$\,:$
\begin{enumerate}[$(a)$]
  \item  The equation $(6)$ has a nontrivial solution $w\in L^2[0,\infty)$$;$
  \item  $|k|>2$ and $\lambda = 1 $.
\end{enumerate}
\end{lem}
\vspace{5mm}

Firstly, except for a positive multiplier, we shall construct a desired function $f$ on a neighborhood of infinity as follows: for $r\ge 1$, let $\alpha$ and $k$ be constants satisfying $ \frac{1}{2} < \alpha < 1$ and $|k| > \frac{n-1}{2}$, respectively. 
\begin{align}
  f_1(r) = \exp \left\{ \int_1^r \left( \frac{1}{t^{\alpha}} 
  + k \frac{\sin 2t}{t} \right) dt \right\},
\end{align}
and consider the Riemannian manifold with boundary:
\begin{align*}
  (N, g_N) = \Bigl( [1,\infty ) \times S^{n-1}(1) 
  , dr^2 + f_1^{\,2}(r)g_{S^{n-1}(1)} \Bigr),
\end{align*}
Then,
\begin{align}
 & A_{f_1}(r) = \frac{f_1'(r)}{f_1(r)} = 2k \frac{\cos 2r}{r} 
   + O\left( \frac{1}{r^{1 + \alpha}} \right); \\
 & K_{f_1}(r) = - \frac{f_1''(r)}{f_1(r)} 
   = - 2k \frac{\cos 2r}{r} + O\left( \frac{1}{r^{ 2 \alpha }} \right);\\
 & \frac{1}{f_1(r)} = O\left( \frac{1}{r^{ m }} \right) \quad {\rm for~any~positive~integer}~m. \nonumber
\end{align}
Thus,
\begin{align}
  q_0(r) & = \frac{(n-1)(n-3)}{4} A_h^2(x) - \frac{(n-1)}{2} K_h(x)\nonumber \\
  & = k(n-1) \frac{\cos 2r}{r} + O(r^{-2\alpha}). \nonumber
\end{align}
Since $ \frac{1}{2} < \alpha < 1$ and $|k|(n-1)>2$, Lemma $2.1$ implies that there exists a nontrivial solution $w (x) \in L^2([1,\infty),dx)$ to the equation
\begin{align*}
  \left( -\frac{d^2}{dx^2} + q_0(x) \right) w (x) = w (x).
\end{align*}
Note that $w$ {\it oscillates around $0$, changes the sign}, and $\lim_{x \to \infty} |w|(x) = 0 $ (see \cite{A} and \cite{A-U}). 
Using this function $w$, we define a function $h$ by
\begin{align}
   h := f_1^{-\frac{n-1}{2}}w .
\end{align}
Then, a direct computation shows that the function $h \bigl( r(p) \bigr)$ $( p \in N) $ satisfies the eigenvalue equation on $(N,g_N)$:
\begin{align*}
  - \Delta_{g_N} \bigl( h(r)  \bigr) 
  = -\left\{ \frac{\partial ^2}{\partial r^2}
  + (n-1) A_{f_1}(r) \frac{\partial }{\partial r} \right\} h(r)
  =  h(r)
\end{align*}
and $h(r)\in L^2(N,dv_{g_N})$. 
Note that $dv_{g_N} = f_1^{\,n-1}(r)\,dr dv_{g_0}$, where $dv_{g_0}$ is the standard measure on the unit sphere $( S^{n-1}(1), g_0)$. 

Secondly, we shall construct a neighborhood of the origin of the desired manifold. 
Let $B_{{\bf R}^n}(0,r)$ be an open ball of radius $r$ and centered at the origin $0$ in the Euclidean space $({\bf R}^{n},g_{{\rm stand}})$ and denote by $\lambda _1 \bigl( B_{{\bf R}^n} (0,r) \bigr)$ the first Dirichlet eigenvalue of $B_{{\bf R}^n}(0,r)$. 
Since $\lim_{r\to +0} \lambda _1 \bigl( B_{{\bf R}^n} (0,r) \bigr) = \infty $ and $\lim_{r\to \infty } \lambda _1 \bigl( B_{{\bf R}^n} (0,r) \bigr) = 0$, there exists $ r_1 > 0 $ such that $ \lambda _1 \bigl( B_{{\bf R}^n} (0,r_1) \bigr) =  1$. 
Let $\widetilde{\varphi}_1$ be its associated positive-valued first eigenfunction. 
Since $\widetilde{\varphi}_1$ is a radial function, it can be written as $\widetilde{\varphi}_1 = H(r)$, where $r$ stands for the Euclidean distance to $0$.  
We note that $ H' < 0 $ on $(0,r_1]$. 

Thirdly, we shall connect two parts mentioned above; in view of $(7)$ and $(10)$, the function $h(t)$ also oscillates around $0$, changes the sign, and converges to $0$ as $t \to \infty$. 
Hence, there exist a constant $ r_2 > \max\{ r_1, 1\} $ such that $h(r_2) < 0 $ and $h'(r_2) < 0 $. 
Therefore, we can connect two functions, $H$ on $[0,r_1]$ and $h|_{[r_2,\infty)}$, by some function $\psi \in C^{\infty}[0,\infty)$ satisfying 
\begin{align}
  \psi (x)  = 
 \begin{cases}
   H(x) & \qquad \mathrm{if}~~x \in [0 , r_1], \\
   h(x) & \qquad \mathrm{if}~~x \in [r_2, \infty) ; 
 \end{cases}
\end{align}
and 
\begin{align}
  \psi ' (x) < 0 \qquad \mathrm{if}~~x \in [r_1 , r_2].
\end{align} 
Now, let us construct a function $f$ so that $\psi$ is an eigenfunction with eigenvalue $1$ on $\bigl( {\bf R}^n, dr^2 + f^2(r) g_{S^{n-1}(1)} \bigr)$, 
that is,
\begin{align}
  \psi''(r) + (n-1) \frac{f'(r)}{f(r)}\psi'(r) = - \psi(r).
\end{align}
By $(11)$ and $(12)$, we have $\psi ' < 0$ on $(0, r_2]$. 
Hence, we can solve the differential equation $(13)$ on the interval $[0,r_2]$ with the condition $f(r_1)=r_1$: 
\begin{align}
  f(r) = r_1 \exp \left\{ - \int_{r_1}^r 
      \frac{\psi(s)+\psi''(s)}{(n-1)\psi'(s)} \,ds \right\}
     \qquad {\rm for}~~r \in [0,r_2].
\end{align}
Since $\psi |_{[0,r_1]}= H$ and $\widetilde{\varphi}_1 = H(r)$ on $B_{{\bf R}^n} (0,r)$, we see that $f(t) = t$ on $[0,r_1]$, and hence, $(B_{{\bf R}^n}(0,r_1), dr^2 + f^2(r)g_{S^{n-1}(1)})$ is a flat disk in $({\bf R}^{n},g_{{\rm stand}})$ with radius $r_1$; next, using this function $f$ on $[0,r_2]$, let us set 
\begin{align}
   f(r) = \frac{f(r_2) }{ f_1(r_2) } f_1 (r)
   \qquad {\rm for}~~r \in [r_2,\infty).
\end{align}
Then, this positive-valued function $f$ on $(0,\infty)$, defined by $(14)$ and $(15)$, satisfies the equation $(13)$, and hence, we see that $\psi$ is an eigenfunction with eigenvalue $1$ on the manifold $(M,g):=\bigl( {\bf R}^n, dr^2 + f^2(r) g_{S^{n-1}(1)} \bigr)$; 
\begin{align}
   1 \in \sigma_{{\rm p}}(-\Delta).
\end{align}
Moreover, from $(15)$, we have for $r \ge r_2$
\begin{align}
  & \nabla dr = A_{f_1}(r) \{ g - dr \otimes dr \} \\
  & K_{{\rm rad.}} = K_{f_1}(r) ,
\end{align}
and hence, Theorem $1.2$ $(i)$ follows from $(8)$ and $(15)$; Theorem $1.2$ $(iii)$ follows from $(9)$ and $(18)$. 

In order to prove that $ - \Delta$ has no eigenvalue on the interval $\left( 0 ,\infty \right) $ except for the special number $1$, we shall use the separation of variables: 
${\bf R}^n - \{ 0 \}$ is diffeomorphic to $(0,\infty) \times S^{n-1}(1)$ and 
we denote the eigenvalues of the Laplacian on the standard unit sphere $S^{n-1}(1)$ by
$$
  0 = \mu_0 < \mu_1 \le \mu_2 \le \cdots
$$ 
with repetitions according to multiplicity; 
then, $-\Delta$ on $(M,g) = ({\bf R}^n,dr^2 + f^2(r) g_{S^{n-1}(1)})$ is unitarily equivalent to the infinite sum of the operators $-\widetilde{L}_j$ on $L^2\bigl( (0,\infty),dx \bigr)$:
\begin{align*}
  & - \widetilde{L}_j = - \frac{d^2}{dx^2} + \widetilde{q}_j \quad 
    \mathrm{on}~~L^2\bigl( (0,\infty),dx \bigr) \quad (j=0,1,2,\cdots);\\
  & \widetilde{q}_j(x) 
    = \frac{(n-1)(n-3)}{4} \left( \frac{f'(x)}{f(x)} \right)^2 +
    \frac{n-1}{2}\frac{f''(x)}{f(x)} + \frac{\mu_j}{f^2(x)}.
\end{align*}
Since 
\begin{align*}
  \widetilde{q}_j(x) 
  = k(n-1) \frac{\cos 2r}{r} + O(r^{-2\alpha}),
\end{align*}
Lemma $2.1$ implies that $-\widetilde{L}_j$ has no eigenvalue on the interval $\left( 0 ,\infty \right) $ except for the special number $1$. 
Since the volume of $(M,g) = ({\bf R}^n,dr^2 + f^2(r) g_{S^{n-1}(1)})$ is infinite, zero is not eigenvalue. 
Thus we have proved Theorem $1.2$. 
\section{Analytic propositions} 

In this section, we shall prepare some analytic propositions for the proof of Theorem $1.1$ and $1.3$. 

Let $(M,g)$ be an $n$-dimensional complete Riemannian manifold and $U$ an open subset of $M$. 
Assume that $E:= M - \overline{U}$ is an end with radial coordinates. 
We shall consider the eigenvalue equation 
\begin{align*}
    \Delta u + \lambda u = 0 \qquad {\rm on}~~E,
\end{align*}
where $\lambda >0$ is a constant.  

Let $\rho (r)$ be a $C^{\infty}$ function of $r \in [r_0,\infty)$, and put 
\begin{align*}
  v(x) = \exp \bigl( \rho (r(x)) \bigr) u(x) \qquad {\rm for}~~x \in E.
\end{align*}
Then it follows that $v$ satisfies on $B(r_0,\infty)$ the equation 
\begin{align*}
  & \Delta v - 2 \rho '(r) \frac{\partial v}{\partial r} + qv = 0,\\
  & q = |\nabla \rho (r)|^2 - \Delta \rho (r) + \lambda \nonumber \\
  & \hspace{1.8mm}  = \left| \rho '(r) \right|^2 - \rho ''(r) 
    - \rho '(r)\Delta r + \lambda.
\end{align*}
As is mentioned in section $1$, we denote by $dA$ the measures on each level surface $S(t)~~(t>0)$ induced from the Riemannian measure $dv_g$ on $(M,g)$. 
\begin{prop}
For any $\psi \in C^{\infty}(M-\overline{U})$ and $r_0\le s< t$, we have
\begin{align*}
 & \int_{B(s,t)} \left\{ |\nabla v|^2 - q|v|^2 \right\}\psi \,dv_g \\
=& \left( \int_{S(t)} - \int_{S(s)} \right)
   \left( \frac{\partial v}{\partial r} \right) \psi v \,dA 
   - \int_{B(s,t)}
   \left\langle 
   \nabla \psi + 2 \psi \rho '(r) \nabla r , \nabla v 
   \right\rangle v \,dv_g.
\end{align*}
\end{prop}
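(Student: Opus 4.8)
The plan is to establish this as a Green-type / integration-by-parts identity for the modified equation that $v$ satisfies. The starting point is the equation derived just above the proposition, namely
\begin{align*}
  \Delta v - 2\rho'(r)\frac{\partial v}{\partial r} + qv = 0 \qquad \text{on } B(r_0,\infty).
\end{align*}
The strategy is to multiply this equation by $\psi v$ and integrate over the region $B(s,t)$ against $dv_g$, then apply the divergence theorem to convert the term containing $\Delta v$ into a boundary contribution plus a bulk gradient term.

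First I would write $\Delta v \cdot \psi v = \operatorname{div}(\psi v \nabla v) - \langle \nabla(\psi v), \nabla v\rangle$, which is the standard product rule for the Laplacian. Expanding the second term gives $\langle \nabla(\psi v),\nabla v\rangle = \psi |\nabla v|^2 + v\langle \nabla\psi,\nabla v\rangle$. Integrating the divergence term over $B(s,t)$ and applying the divergence theorem produces a boundary integral over $\partial B(s,t) = S(t) \cup S(s)$; since $\nabla r$ is the outward unit normal on $S(t)$ and the inward unit normal on $S(s)$, this boundary term becomes exactly $\bigl(\int_{S(t)} - \int_{S(s)}\bigr) \psi v \langle \nabla v,\nabla r\rangle \, dA$, and $\langle \nabla v,\nabla r\rangle = \frac{\partial v}{\partial r}$ gives the first term on the right-hand side.

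Next I would handle the remaining two terms from the equation. The term $-2\rho'(r)\frac{\partial v}{\partial r}\cdot \psi v$ integrated over the bulk contributes $-\int_{B(s,t)} 2\psi \rho'(r) v \langle \nabla r,\nabla v\rangle \, dv_g$, which combines with the $-\int_{B(s,t)} v\langle \nabla\psi,\nabla v\rangle\, dv_g$ coming from the product rule to form precisely $-\int_{B(s,t)} \langle \nabla\psi + 2\psi\rho'(r)\nabla r,\, \nabla v\rangle\, v\, dv_g$, the last term in the claimed identity. Meanwhile the term $qv\cdot\psi v = q|v|^2\psi$ integrates to $\int_{B(s,t)} q|v|^2\psi\, dv_g$, which moves to the left-hand side and pairs with $\int_{B(s,t)}\psi|\nabla v|^2\, dv_g$ to produce $\int_{B(s,t)}\{|\nabla v|^2 - q|v|^2\}\psi\, dv_g$. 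Collecting everything reproduces the stated formula.

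I do not expect a genuine obstacle here, as the proposition is essentially a bookkeeping identity rather than a substantive estimate. The one point requiring care is the boundary orientation: one must verify that applying the divergence theorem on the annular region $B(s,t)$ yields the difference $\int_{S(t)} - \int_{S(s)}$ with the correct signs, using that the outward normal to $B(s,t)$ points in the $+\nabla r$ direction on the outer sphere $S(t)$ and in the $-\nabla r$ direction on the inner sphere $S(s)$. A secondary technical point is ensuring $v$ and $\psi$ are smooth enough up to the boundary spheres for the divergence theorem to apply, which follows since $\psi \in C^\infty(M-\overline{U})$ and $v = e^{\rho(r)}u$ inherits the regularity of the solution $u$ on the end.
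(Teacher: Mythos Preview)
Your argument is correct and is exactly the standard Green-type computation one expects: multiply the equation $\Delta v - 2\rho'(r)\partial_r v + qv = 0$ by $\psi v$, integrate, and apply the divergence theorem on the annulus $B(s,t)$. The paper itself does not write out a proof but simply remarks that Proposition~3.1 is obtained by setting $c=0$ in Proposition~3.1 of reference~[18]; the underlying derivation there is the same integration-by-parts identity you have supplied, so your approach coincides with what the cited source does.
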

\begin{proof}
Proposition $3.1$ is obtained by setting $c=0$ in [18], Proposition $3.1$. 
\end{proof}
\begin{prop}
For any $r_0< s< t$ and $\gamma \in \mathbf{R}$, we have
\begin{align*}
 & \left( \int_{S(t)} - \int_{S(s)} \right) r^{\gamma}
   \left\{ \left(\frac{\partial v}{\partial r} \right)^2
   - \frac{1}{2}|\nabla v|^2 + \frac{1}{2}q|v|^2 \right\} \,dA \nonumber \\
= 
 & \int_{B(s,t)} r^{\gamma -1} \left\{ r(\nabla dr)(\nabla v,\nabla v)
   - \frac{1}{2} ( \gamma + r\Delta r ) \left( |\nabla v|^2 
   - \left( \frac{\partial v}{\partial r} \right)^2 \right) \right\} 
   \,dv_g  \nonumber \\
 & + \int_{B(s,t)}r^{\gamma -1} 
   \left\{ \frac{1}{2}( \gamma - r\Delta r ) + 2r\rho'(r) \right\}
   \left( \frac{\partial v}{\partial r} \right)^2 \,dv_g  \nonumber \\
 &  + \frac{1}{2} \int_{B(s,t)}r^{\gamma -1} 
   \left\{ 
   ( \gamma + r\Delta r ) q + r \frac{\partial q}{\partial r} 
   \right\}|v|^2 \,dv_g .
\end{align*}
\end{prop}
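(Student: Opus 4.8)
The plan is to establish this as a Rellich--Pohozaev type identity, obtained by integrating a single pointwise divergence identity over $B(s,t)$ and applying the divergence theorem. Since $\partial B(s,t) = S(t)\cup S(s)$ with outward unit normals $\nabla r$ on $S(t)$ and $-\nabla r$ on $S(s)$, the flux of any vector field $Y$ across $\partial B(s,t)$ is exactly $\left(\int_{S(t)} - \int_{S(s)}\right)\langle Y,\nabla r\rangle\,dA$, which is precisely the antisymmetric boundary combination on the left-hand side. Thus the entire task reduces to exhibiting one field $Y$ whose normal component on the level sets equals $r^{\gamma}\left\{(\partial_r v)^2 - \frac{1}{2}|\nabla v|^2 + \frac{1}{2}q|v|^2\right\}$ and whose divergence reproduces the three bulk integrands.

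The multiplier I would use is the radial field $W = r^{\gamma}\nabla r$, fed into the Rellich--Necas identity
\begin{align*}
  \operatorname{div}\left( (Wv)\,\nabla v - \frac{1}{2}|\nabla v|^2\,W \right)
   = (Wv)\,\Delta v + (\nabla W)(\nabla v,\nabla v)
     - \frac{1}{2}(\operatorname{div} W)\,|\nabla v|^2 ,
\end{align*}
where $Wv := \langle W,\nabla v\rangle = r^{\gamma}\,\partial_r v$ and the Hessian term uses the symmetry of $\nabla dr$. For $W = r^{\gamma}\nabla r$ one computes $\nabla W = \gamma r^{\gamma-1}\,dr\otimes dr + r^{\gamma}\,\nabla dr$ and $\operatorname{div} W = \gamma r^{\gamma-1} + r^{\gamma}\Delta r$, so the Hessian term $r(\nabla dr)(\nabla v,\nabla v)$ and the weights $\gamma + r\Delta r$ enter here, while the flux of $(Wv)\nabla v - \frac{1}{2}|\nabla v|^2 W$ against $\nabla r$ already supplies the boundary terms $(\partial_r v)^2 - \frac{1}{2}|\nabla v|^2$. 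The direct output of the identity carries a coefficient $\gamma$ on $(\partial_r v)^2$ and $-\frac{1}{2}(\gamma + r\Delta r)$ on $|\nabla v|^2$; to reach the stated split one regroups by writing $-\frac{1}{2}(\gamma + r\Delta r)|\nabla v|^2 = -\frac{1}{2}(\gamma + r\Delta r)\bigl(|\nabla v|^2 - (\partial_r v)^2\bigr) - \frac{1}{2}(\gamma + r\Delta r)(\partial_r v)^2$, which isolates the tangential combination in the first integral and leaves the coefficient $\frac{1}{2}(\gamma - r\Delta r)$ on $(\partial_r v)^2$ in the second.

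It remains to treat $(Wv)\,\Delta v$, and here I would substitute the equation $\Delta v = 2\rho'(r)\,\partial_r v - qv$ satisfied by $v$. The part $2\rho' r^{\gamma}(\partial_r v)^2$ furnishes the $2r\rho'$ contribution to the second integral, while the remaining piece $-q r^{\gamma}v\,\partial_r v = -\frac{1}{2}q r^{\gamma}\,\partial_r(v^2)$ requires one more integration by parts: using $\operatorname{div}\left(\frac{1}{2}q r^{\gamma}v^2\,\nabla r\right) = q r^{\gamma}v\,\partial_r v + \frac{1}{2}\left\{r^{\gamma}\partial_r q + \gamma q r^{\gamma-1} + q r^{\gamma}\Delta r\right\}v^2$, one absorbs $-q r^\gamma v\,\partial_r v$ into this divergence (which contributes exactly the missing boundary term $\frac{1}{2}q|v|^2$) plus the zeroth-order bulk terms $\frac{1}{2}r^{\gamma-1}\left\{(\gamma + r\Delta r)q + r\,\partial_r q\right\}v^2$ of the third integral. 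Collecting all contributions, factoring out $r^{\gamma-1}$, and integrating over $B(s,t)$ yields the asserted equality. I expect no essential difficulty beyond careful bookkeeping; the one point demanding attention is matching the two successive integrations by parts so that the radial divergence $\frac{1}{2}q r^{\gamma}v^2\nabla r$ delivers the boundary term $\frac{1}{2}q|v|^2$ while its radial derivative produces precisely the coupling $(\gamma + r\Delta r)q + r\,\partial_r q$, and verifying that the regrouped $(\partial_r v)^2$ coefficients from the first and second integrals sum consistently to $\gamma + 2r\rho'$.
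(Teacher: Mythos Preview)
Your proposal is correct: the Rellich--Pohozaev computation with multiplier $W=r^{\gamma}\nabla r$, combined with the substitution $\Delta v=2\rho'(r)\partial_r v-qv$ and the auxiliary divergence $\operatorname{div}\bigl(\tfrac{1}{2}qr^{\gamma}v^{2}\nabla r\bigr)$, reproduces exactly the stated identity after the regrouping you describe. The paper does not supply an independent proof here; it simply cites the identity from the author's earlier preprint~\cite{K3} (obtained there with an extra parameter $c$, specialised to $c=0$), and that earlier proof proceeds by precisely the same radial-multiplier divergence computation you outline.
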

\begin{proof}
Proposition $3.2$ is obtained by setting $c=0$ in [18], Proposition $3.3$. 
\end{proof}
\begin{prop}
Let $\nabla r, X_1, X_2, \cdots , X_{n-1}$ be an orthonormal base for the tangent space $T_xM$ at each point $x \in M - \overline{U}$ and $\psi_1(t)$ a function of $t\in [r_0,\infty)$. 
Then, for any real numbers $\gamma$, $\varepsilon$, and $0\le s<t$, we have
\begin{align*}
  & \int_{S(t)} r^{\gamma} \left\{ 
    \left(\frac{\partial v}{\partial r}\right)^2 + \frac{1}{2}q|v|^2 
    - \frac{1}{2}|\nabla v|^2 
    + \frac{ \gamma + \psi_1 (r) }{2r}
    \frac{\partial v}{\partial r}v \right\} \,dA \nonumber \\
  & + \int_{S(s)} r^{\gamma} \left\{ \frac{1}{2}|\nabla v|^2
    - \frac{1}{2}q|v|^2 - \left( \frac{\partial v}{\partial r} \right)^2
    - \frac{\gamma + \psi_1 (r) }{2r} \frac{\partial v}{\partial r}v \right\} 
    \,dA \nonumber \\
 =& \int_{B(s,t)} r^{\gamma -1} \left\{ r(\nabla dr)(\nabla v,\nabla v)
    - \frac{1}{2} \bigl( r\Delta r - \psi_1 (r) \bigr)
    \sum_{i=1}^{n-1} \left( dv(X_i) \right)^2 \right\} \,dv_g \\
  & + \int_{B(s,t)} r^{\gamma-1} \left\{ \gamma - 
    \frac{1}{2} \bigl( r\Delta r - \psi_1 (r) \bigr) + 2r \rho'(r) \right\}
    \left(\frac{\partial v}{\partial r}\right)^2 \,dv_g \nonumber \\
  & + \frac{1}{2} \int_{B(s,t)} r^{\gamma-1}
    \left\{ r\left( \frac{\partial q}{\partial r} \right)
    + q\bigl( r\Delta r - \psi_1 (r) \bigr) \right\} |v|^2 \,dv_g \nonumber \\
  & + \frac{1}{2} \int_{B(s,t)} r^{\gamma-1}
    \left\{ \left( \frac{\gamma-1}{r} + 2 \rho ' (r) \right) 
    \bigl( \gamma + \psi_1 (r) \bigr) + \psi_1 '(r) \right\} 
    \frac{\partial v}{\partial r}v \,dv_g.
\end{align*}
\end{prop}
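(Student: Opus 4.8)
The plan is to obtain Proposition $3.3$ as a purely algebraic combination of Propositions $3.1$ and $3.2$, performing no new integration by parts. The guiding observation is that the integrand over $S(s)$ in the left-hand side of Proposition $3.3$ is exactly the negative of the integrand over $S(t)$, so the entire left-hand side collapses into
\begin{align*}
& \Bigl(\int_{S(t)}-\int_{S(s)}\Bigr) r^{\gamma}\Bigl\{\Bigl(\tfrac{\partial v}{\partial r}\Bigr)^2-\tfrac12|\nabla v|^2+\tfrac12 q|v|^2\Bigr\}\,dA \\
& \quad + \Bigl(\int_{S(t)}-\int_{S(s)}\Bigr) r^{\gamma}\,\tfrac{\gamma+\psi_1(r)}{2r}\,\frac{\partial v}{\partial r}v\,dA .
\end{align*}
The first group is precisely the surface side of Proposition $3.2$, while the second group is the boundary term of Proposition $3.1$ for the weight
\[
\psi := \tfrac12\,r^{\gamma-1}\bigl(\gamma+\psi_1(r)\bigr),
\]
since with this choice $r^{\gamma}\frac{\gamma+\psi_1(r)}{2r}=\psi$.

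With this identification I would substitute the right-hand side of Proposition $3.2$ for the first group, and for the second group invoke Proposition $3.1$ to rewrite the boundary term as $\int_{B(s,t)}\{|\nabla v|^2-q|v|^2\}\psi\,dv_g+\int_{B(s,t)}\langle\nabla\psi+2\psi\rho'(r)\nabla r,\nabla v\rangle v\,dv_g$. Because $\psi$ is a function of $r$ alone, $\nabla\psi=\psi'(r)\nabla r$, so $\langle\nabla\psi+2\psi\rho'\nabla r,\nabla v\rangle=(\psi'+2\psi\rho')\frac{\partial v}{\partial r}$; a direct differentiation gives $\psi'+2\psi\rho'=\tfrac12 r^{\gamma-1}\{(\tfrac{\gamma-1}{r}+2\rho')(\gamma+\psi_1)+\psi_1'\}$, which reproduces verbatim the last volume integral of Proposition $3.3$.

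It then remains to match the three remaining families of volume integrals, and here the key manipulation is the orthonormal decomposition $|\nabla v|^2=\bigl(\frac{\partial v}{\partial r}\bigr)^2+\sum_{i=1}^{n-1}(dv(X_i))^2$, together with the fact that the Hessian annihilates the radial direction, so that $(\nabla dr)(\nabla v,\nabla v)$ is purely tangential. Feeding this decomposition into the $\tfrac12 r^{\gamma-1}(\gamma+\psi_1)|\nabla v|^2$ term produced by Proposition $3.1$ splits it into a tangential and a radial piece: the tangential piece combines with the $-\tfrac12(\gamma+r\Delta r)$ coefficient inherited from Proposition $3.2$ to give the coefficient $-\tfrac12(r\Delta r-\psi_1)$ of $\sum_{i=1}^{n-1}(dv(X_i))^2$, while the radial piece adds to $\tfrac12(\gamma-r\Delta r)+2r\rho'$ to give $\gamma-\tfrac12(r\Delta r-\psi_1)+2r\rho'$ as the coefficient of $(\frac{\partial v}{\partial r})^2$. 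Similarly, the $-\tfrac12 r^{\gamma-1}(\gamma+\psi_1)q|v|^2$ term merges with $\tfrac12 r^{\gamma-1}(\gamma+r\Delta r)q$ from Proposition $3.2$ to yield $\tfrac12 r^{\gamma-1}(r\Delta r-\psi_1)q$, the $r\frac{\partial q}{\partial r}$ contribution being carried over unchanged. I expect no genuine obstacle beyond this bookkeeping: the only point demanding care is keeping the radial and tangential components of $|\nabla v|^2$ rigorously separate, since it is exactly the redistribution of the common factor $\tfrac12(\gamma+\psi_1)$ among the tangential, radial, and potential terms that converts the $\gamma+r\Delta r$ weights of Proposition $3.2$ into the $r\Delta r-\psi_1$ weights of Proposition $3.3$.
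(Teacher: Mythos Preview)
Your proposal is correct and follows essentially the same route as the paper: set $\psi=\tfrac12 r^{\gamma-1}(\gamma+\psi_1(r))$ in Proposition~3.1 and add the resulting identity to Proposition~3.2. The paper simply states ``addition of this equation to the equation in Proposition~3.2, we get Proposition~3.3,'' whereas you have spelled out the bookkeeping---the orthonormal splitting of $|\nabla v|^2$ and the way the factor $\tfrac12(\gamma+\psi_1)$ converts each $\gamma+r\Delta r$ coefficient into $r\Delta r-\psi_1$---which is exactly the computation the paper leaves implicit.
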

\begin{proof}
Set $\psi = \frac{1}{2} r^{\gamma -1} \bigl( \gamma + \psi_1(r) \bigr) $ in Proposition $3.1$. 
Then we have
\begin{align*}
 & \frac{1}{2} \left( \int_{S(t)} - \int_{S(s)} \right) r^{\gamma-1} 
     \bigl( \gamma + \psi_1(r) \bigr) \frac{\partial v}{\partial r}v \,dA\\
= 
 & \frac{1}{2} \int_{B(s,t)} 
   r^{\gamma -1} \bigl( \gamma + \psi_1(r) \bigr) 
   \left\{ |\nabla v|^2 - q|v|^2 \right\} \,dv_g \\
 & \hspace{5mm} + \frac{1}{2} \int_{B(s,t)} r^{ \gamma - 2 } 
   \Bigl\{ \bigl( \gamma -1 + 2r \rho'(r) \bigr) 
   \bigl( \gamma + \psi_1(r) \bigr) 
   + r \psi_1 '(r) \Bigr\} \frac{\partial v}{\partial r} v \,dv_g.
\end{align*}
Addition of this equation to the equation in Proposition $3.2$, we get Proposition $3.3$. 
\end{proof}
\begin{lem}
For any $ \beta \in {\bf R}$, we have 
\begin{align}
   \left( \int_{S(t)} - \int_{S(s)} \right) r^{\beta } |v|^2 \,dA
 = \int_{B(s,t)} r^{\beta }
   \left\{ \left( \Delta r + \frac{\beta }{r} \right)|v|^2
   + 2v\frac{\partial v}{\partial r} \right\} \,dv_g .
\end{align}
\end{lem}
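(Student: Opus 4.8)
The plan is to apply the divergence theorem to the radial vector field
\begin{align*}
  X = r^{\beta} |v|^2 \, \nabla r
\end{align*}
on the collar $B(s,t)$. Because $E$ is an end with radial coordinates, the outward normal exponential map is a diffeomorphism and so $r$ is $C^{\infty}$ on all of $B(r_0,\infty) \supset \overline{B(s,t)}$; there is no cut locus to obstruct smoothness. Since $v$ is also smooth, $X$ is a smooth vector field on a region whose boundary is $\partial B(s,t) = S(s) \cup S(t)$, and the divergence theorem applies.

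First I would compute $\operatorname{div} X$. Using $|\nabla r| = 1$, the fact that $\langle \nabla \phi, \nabla r\rangle = \partial \phi / \partial r$ for the directional derivative along $\nabla r$, and the standard identity $\operatorname{div}(\nabla r) = \Delta r$, one obtains
\begin{align*}
  \operatorname{div} X
  = \frac{\partial}{\partial r}\left( r^{\beta} |v|^2 \right)
    + r^{\beta} |v|^2 \, \Delta r
  = r^{\beta} \left\{ \left( \Delta r + \frac{\beta}{r} \right) |v|^2
    + 2 v \frac{\partial v}{\partial r} \right\},
\end{align*}
after expanding $\partial_r ( r^{\beta} |v|^2 ) = \beta r^{\beta - 1} |v|^2 + 2 r^{\beta} v \, \partial_r v$. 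This is precisely the integrand on the right-hand side of $(6)$.

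Next I would evaluate the boundary flux. The outward unit normal of $B(s,t)$ is $+\nabla r$ along $S(t)$ and $-\nabla r$ along $S(s)$, while $\langle X, \nabla r\rangle = r^{\beta}|v|^2$; hence the boundary integral of $\langle X, \nu\rangle$ equals $\left( \int_{S(t)} - \int_{S(s)} \right) r^{\beta} |v|^2 \, dA$. Equating this with $\int_{B(s,t)} \operatorname{div} X \, dv_g$ yields $(6)$ immediately.

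The computation is essentially routine, so I do not expect a genuine obstacle; the only points deserving a word of care are the smoothness of $r$ (supplied by the radial-coordinates hypothesis, which removes any cut-locus difficulty) and the identity $\operatorname{div}(\nabla r) = \Delta r$. As an alternative to the divergence theorem, the same identity follows from the coarea formula by differentiating $t \mapsto \int_{S(t)} r^{\beta}|v|^2 \, dA$ through the first variation of the area element, $\frac{\partial}{\partial r}(dA) = (\Delta r)\, dA$, and then integrating in $t$ over $[s,t]$.
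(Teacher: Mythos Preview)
Your proof is correct: applying the divergence theorem to $X = r^{\beta}|v|^2\,\nabla r$ on $B(s,t)$ is exactly the right computation, and the smoothness of $r$ (guaranteed by the radial-coordinates hypothesis) justifies it without qualification. The paper itself does not prove this lemma directly but merely cites Lemma~3.2 of \cite{K3}; your divergence-theorem argument is the standard one and is almost certainly what underlies that reference as well.
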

\begin{proof}
Lemma $3.1$ follows from [18], Lemma $3.2$. 
\end{proof}
\begin{lem}
For any $ m \in {\bf R}$, we have
\begin{align*}
 &  \int_{B(x,\infty)} r^{1-2m} 
    \left\{ |\nabla v|^2 - q|v|^2 \right\} \,dv_g  \nonumber \\
=&  - \frac{1}{2} \frac{d}{dx} \left( x^{1-2m} \int_{S(x)} |v|^2 \,dA \right)
    - \frac{1}{2} \int_{S(x)} r^{-2m} 
    \left\{ 2m - 1 - r \Delta r \right\} |v|^2 \,dA \nonumber \\
 &  - \int_{B(x,\infty)} r^{-2m} 
    \left( \frac{\partial v}{\partial r} \right) v \,dv_g,
\end{align*}
\end{lem}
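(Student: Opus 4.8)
The plan is to prove this as a single weighted Green identity on the truncated shell $B(x,t)$ and then let $t\to\infty$. The natural weight is $\psi = r^{1-2m}$. Applying Proposition $3.1$ with this $\psi$ and $s=x$, and using $\nabla(r^{1-2m}) = (1-2m)r^{-2m}\nabla r$ together with $\langle \nabla r, \nabla v\rangle = \partial v/\partial r$, I would write $\int_{B(x,t)} r^{1-2m}\{|\nabla v|^2 - q|v|^2\}\,dv_g$ as the flux difference $\bigl(\int_{S(t)} - \int_{S(x)}\bigr) r^{1-2m}\,(\partial v/\partial r)\,v\,dA$ minus the volume integral $\int_{B(x,t)} \{(1-2m)r^{-2m} + 2r^{1-2m}\rho'(r)\}\,(\partial v/\partial r)\,v\,dv_g$. (Equivalently, expand $\operatorname{div}(r^{1-2m} v\nabla v)$ directly and eliminate the $v\,\Delta v$ term through the eigenvalue equation $\Delta v + qv = 2\rho'(r)\,\partial v/\partial r$.)

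The second step is to recast the inner flux $-x^{1-2m}\int_{S(x)} v\,(\partial v/\partial r)\,dA$ into the total-derivative form required on the right-hand side. Differentiating Lemma $3.1$ with $\beta = 0$ via the coarea formula ($|\nabla r|\equiv 1$) gives $\frac{d}{dx}\int_{S(x)}|v|^2\,dA = \int_{S(x)}\{\Delta r\,|v|^2 + 2v\,\partial v/\partial r\}\,dA$, hence $\int_{S(x)} v\,\partial v/\partial r\,dA = \tfrac12\frac{d}{dx}\int_{S(x)}|v|^2\,dA - \tfrac12\int_{S(x)}\Delta r\,|v|^2\,dA$. Inserting this and peeling the factor $x^{1-2m}$ off the derivative produces exactly $-\tfrac12\frac{d}{dx}\bigl(x^{1-2m}\int_{S(x)}|v|^2\,dA\bigr)$ together with the leftover surface terms $\tfrac12(1-2m)x^{-2m}\int_{S(x)}|v|^2$ and $\tfrac12 x^{1-2m}\int_{S(x)}\Delta r\,|v|^2$; these two recombine, on $S(x)$ where $r=x$, into precisely $-\tfrac12\int_{S(x)} r^{-2m}\{2m-1-r\Delta r\}|v|^2\,dA$, the second boundary term claimed.

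There remain the volume integral and the outer flux. The volume integrand $\{(1-2m)r^{-2m} + 2r^{1-2m}\rho'(r)\}(\partial v/\partial r)v$ collapses to the claimed $r^{-2m}(\partial v/\partial r)v$ exactly when the logarithmic derivative of the weight is matched to the conformal exponent, $\rho'(r) = m/r$ (the choice $v = r^m u$), since then $2r^{1-2m}\rho' = 2m\,r^{-2m}$ and $(1-2m)+2m = 1$; this algebraic reconciliation is what ties the parameter $m$ in the weight to the radial factor in $v$. The genuine obstacle is the passage $t\to\infty$: one must show that the outer flux $\int_{S(t)} r^{1-2m}(\partial v/\partial r)v\,dA$ tends to $0$ (at least along a sequence $t_k\to\infty$) and that the improper integral over $B(x,\infty)$ converges. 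This is precisely where the growth hypothesis $\liminf_{t\to\infty} t^{\gamma}\int_{S(t)}\{(\partial u/\partial r)^2+|u|^2\}\,dA = 0$ enters, controlling $\int_{S(t)}|v|\,|\partial v/\partial r|\,dA$ by Cauchy--Schwarz; securing this decay, with enough uniformity to take the limit, is the crux.
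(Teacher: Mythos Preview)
Your derivation is correct and is exactly the natural route: Proposition~3.1 with $\psi=r^{1-2m}$ produces the flux and volume terms, and the coarea differentiation of Lemma~3.1 (with $\beta=0$) converts the inner flux $-x^{1-2m}\int_{S(x)}v\,\partial_r v\,dA$ into the total derivative plus the surface remainder $-\tfrac12\int_{S(x)}r^{-2m}\{2m-1-r\Delta r\}|v|^2\,dA$. The paper does not write out a proof here; it simply cites equations~(32)--(33) of~\cite{K3}, whose derivation is precisely this Green-identity-plus-coarea argument. Your key observation that the volume coefficient $(1-2m)r^{-2m}+2r^{1-2m}\rho'(r)$ collapses to $r^{-2m}$ \emph{only} when $\rho'(r)=m/r$, i.e.\ $v=r^{m}u$, is exactly right and is an implicit hypothesis of the lemma (it is used in Section~5 with $\rho(r)=m\log r$).

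One small correction on the passage $t\to\infty$: the hypothesis~(7) you invoke is not what is actually used, and by itself it would not control the flux for large $m$ (note $r^{1-2m}(\partial_r v)v = m\,u^{2}+r\,u\,\partial_r u$, so one needs $\gamma\ge 1$ at least). In the paper the lemma is only invoked in the proof of Proposition~5.1, \emph{after} Proposition~4.1 has already established $\int_{B(r_0,\infty)}r^{N}\{|u|^{2}+|\nabla u|^{2}\}\,dv_g<\infty$ for every $N>0$; with $v=r^{m}u$ this gives $r^{1-2m}\{|\nabla v|^{2}+|v|^{2}\}\in L^{1}(B(r_0,\infty))$ for every $m$, so the outer flux vanishes along a subsequence and the improper integrals converge automatically. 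In other words, the ``crux'' you flag is real but is discharged upstream by Proposition~4.1, not by the raw hypothesis~(7).
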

\begin{proof}
Lemma $3.2$ is got by putting $c=0$ in [18], equations $(32)$ and $(33)$. 
\end{proof}
\section{Faster than polynomial decay}

Let $(M,g)$ be an $n$-dimensional noncompact complete Riemannian manifold and $U$ an open subset of $M$. 
We assume that $E: = M - \overline{U}$ is an end with radial coordinates. 
We denote $r = {\rm dist}(U,*)$ on $E$. 
Let us set 
\begin{align*}
   \widetilde{g} = g - dr \otimes dr
\end{align*} 
and assume that there exists $r_0>0$ such that 
\begin{align}
  \left\{ \frac{f'(r)}{f(r)} - \frac{a}{r} \right\}  \,\widetilde{g} 
  \le \nabla dr 
  \le \left\{ \frac{f'(r)}{f(r)} + \frac{b}{r} \right\}  \,\widetilde{g} 
  \qquad {\rm on}~~B(r_0,\infty),  \tag{$*_1$} 
\end{align}
where $f(t)$ is a positive-valued $C^{\infty}$ function of $t\in [r_0,\infty)$, and $a$ and $b$ are positive constants. 
In the sequel, we shall often use the following notation for simplicity:
\begin{align*}
  & \widehat{a} = ( n - 1 ) a;~
    \widehat{b} = ( n - 1 ) b;~
    \widehat{b}_1 (r) = ( n - 1 ) b_1 (r);\\
  & A(r) = \frac{f'(r)}{f(r)};~K(r) = - \frac{f''(r)}{f(r)};\\
  & \widehat{A}(r) = (n-1) A(r);~\widehat{K}(r) = (n-1)K(r).
\end{align*} 
We assume that
\begin{align}
 & A(r) \ge \frac{A_0}{r} \qquad {\rm for}~~r \ge r_0 ; \tag{$*_2$} \\
 & 2(A_0-a) > (n-1)a + (n-1)b ; \tag{$*_3$} \\
 & \lim_{r\to \infty} A(r) = 0 ;  \tag{$*_4$} \\
 & - \widehat{K}_1 \le r \widehat{A}\,'(r) = - r \left( \widehat{A}(r)A(r) + \widehat{K} (r) \right) \le \widehat{K}_2 \qquad {\rm for}~~r \ge r_0 \tag{$*_5$}
\end{align}
and set
\begin{align}
  \widehat{K}_3=\max\{ \widehat{K}_1,\widehat{K}_2 \},\tag{$*_{5'}$}
\end{align}
where $A_0$, $K_1$, and $K_2$ are positive constants. 
\begin{prop}
Assume that there exist a positive-valued $C^{\infty}$ function $f(t)$ of $t \in [r_0,\infty)$ and constants $r_0 > 0$, $a$, and $b$ such that $(*_1)$, $(*_2)$, $(*_3)$, $(*_4)$ and $(*_5)$ hold. 
Let $\lambda > 0$ be a constant and $u$ a solution to 
\begin{align*}
     \Delta u + \lambda u = 0 \qquad {\rm on}~~B(r_0,\infty).
\end{align*}
Moreover, let 
\begin{align}
   \gamma > \frac{n-1}{2} ( a + b )  \tag{$*_9$}
\end{align} 
be a constant and assume that $u$ satisfies the condition$:$
\begin{align}
  \liminf_{t\to \infty} ~ t^{\gamma} \int_{S(t)}
  \left\{ \left( \frac{\partial u}{\partial r} \right)^2 + |u|^2 
  \right\} \,dA=0.
\end{align}
Moreover, we assume that 
\begin{align}
  \lambda > 
  \frac{(\widehat{K}_3)^2}{\left\{(-\varepsilon_0) -\widehat{a} \right\}\left\{2\gamma - \widehat{b}- (-\varepsilon_0) \right\}},
\end{align}
where $\varepsilon_0$ is a constant satisfying
\begin{align}
  \widehat{a} < (-\varepsilon_0) < 2\gamma - \widehat{b} \quad {\rm and} \quad 
  (-\varepsilon_0) < 2(A_0 - a) - \widehat{b}. 
\end{align}
Then, we have for any $m > 0$
\begin{align}
   \int_{B(r_0,\infty)} r^m \left\{ |u|^2 + |\nabla u|^2 \right\} 
    \,dv_g < \infty.
\end{align}
\end{prop}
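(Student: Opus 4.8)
The plan is to specialize the Rellich--Pohozaev identity of Proposition $3.3$ to $\rho\equiv 0$, so that $v=u$ and the potential $q$ collapses to the constant $\lambda$ (hence $\partial q/\partial r\equiv 0$), and to run it on shells $B(R_0,t)$ with the weight exponent $\gamma$ together with the choice $\psi_1(r)=r\widehat{A}(r)+\varepsilon_0$. Tracing $(*_1)$ over the level set gives $\widehat{A}-\widehat{a}/r\le \Delta r\le \widehat{A}+\widehat{b}/r$, so the recurring quantity $\xi:=r\Delta r-\psi_1=r(\Delta r-\widehat{A})-\varepsilon_0$ lies in the fixed interval $[\xi_-,\xi_+]$ with $\xi_-=(-\varepsilon_0)-\widehat{a}$ and $\xi_+=(-\varepsilon_0)+\widehat{b}$. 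With this choice the four bulk integrals of Proposition $3.3$ combine into $\int_{B(R_0,t)}r^{\gamma-1}\mathcal{Q}\,dv_g$, where $\mathcal{Q}=c_1(\partial_r u)^2+c_2|\nabla^{\top}u|^2+c_3\,u^2+c_4(\partial_r u)u$ is a quadratic form in the radial derivative, the tangential gradient $\nabla^{\top}u$, and $u$ itself; here $\int_{B}(\cdots)\,dv_g=\int\!\!\int_{S(r)}(\cdots)\,dA\,dr$.

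Next I would check that $\mathcal{Q}\ge 0$ for large $r$. The admissibility conditions on $\varepsilon_0$ make the diagonal coefficients positive: $c_1\ge\gamma-\xi_+/2>0$ because $-\varepsilon_0<2\gamma-\widehat{b}$; $c_3=\tfrac{\lambda}{2}\xi\ge\tfrac{\lambda}{2}\xi_->0$ because $-\varepsilon_0>\widehat{a}$; and $c_2\ge A_0-a-\xi_+/2>0$ because $-\varepsilon_0<2(A_0-a)-\widehat{b}$, where $(*_2)$ supplies $rA\ge A_0$. The off-diagonal coefficient is $c_4=\tfrac12 r\widehat{A}'+o(1)$, and $(*_5)$ bounds $|r\widehat{A}'|\le\widehat{K}_3$, while $(*_4)$ makes the remaining pieces ($\gamma\widehat{A}$ and the $r^{-1}$ contributions) $o(1)$. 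Using $\xi(2\gamma-\xi)\ge\xi_-(2\gamma-\xi_+)$, the discriminant inequality $c_4^2\le 4c_1c_3$ reduces, up to the margin left by strictness, to $\widehat{K}_3^{\,2}<\lambda\,\xi_-(2\gamma-\xi_+)$, which is exactly the assumed lower bound on $\lambda$; that margin absorbs the $o(1)$ terms, so $\mathcal{Q}\ge c_*\{(\partial_r u)^2+u^2+|\nabla^{\top}u|^2\}$ on $B(R_1,\infty)$ for some $c_*>0$. The role of $(*_9)$ and $(*_3)$ is to guarantee that an admissible $\varepsilon_0$ exists at all, since they say $\widehat{a}<2\gamma-\widehat{b}$ and $\widehat{a}<2(A_0-a)-\widehat{b}$.

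I would then let $t\to\infty$ through a sequence realising the $\liminf$. In the $S(t)$ boundary term of Proposition $3.3$ the tangential energy enters as $-\tfrac12|\nabla^{\top}u|^2$, i.e.\ with the sign favourable for an \emph{upper} bound, and the cross term is controlled by $|\gamma+\psi_1|/r\le C$; hence that term is bounded by $C\,t^{\gamma}\int_{S(t)}\{(\partial_r u)^2+u^2\}\,dA$, which tends to $0$ along the chosen sequence. Since the bulk is nonnegative for $r\ge R_1$ and increasing in $t$, while $S(R_0)$ contributes a fixed finite quantity, we conclude $\int_{B(R_0,\infty)}r^{\gamma-1}\{u^2+|\nabla u|^2\}\,dv_g<\infty$; monotonicity also forces the full limit of the boundary term to be $0$.

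The genuine difficulty is upgrading this to every weight $m$, and here the naive bootstrap stalls: converting a finite tail integral $\int r^{p}\{\cdots\}$ into a $\liminf$ gains one power, but Proposition $3.3$ loses one power between the boundary ($r^{\gamma}$) and the bulk ($r^{\gamma-1}$), so the exponent does not advance. The gain must instead come from promoting finiteness to a decay \emph{rate}: writing $I(s)=\int_{B(s,\infty)}r^{\gamma-1}\mathcal{Q}\,dv_g$, the full-limit observation gives $I(s)=-[\text{boundary term at }S(s)]$, and combining the upper bound $I(s)\le C\,s^{\gamma}\int_{S(s)}\{\cdots\}\,dA$ with $-I'(s)\ge c_*\,s^{\gamma-1}\int_{S(s)}\{\cdots\}\,dA$ yields the differential inequality $I(s)\le C'\,s\,(-I'(s))$, whence $I(s)=O(s^{-\nu})$ with $\nu=1/C'>0$. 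Integrating this tail decay against $r^{\nu'}$ for $\nu'<\nu$ raises the admissible weight from $\gamma-1$ to $\gamma-1+\nu'$. Because $(*_3)$ leaves room for one fixed $\varepsilon_0$ that stays admissible for all larger $\gamma$, and the bound on $\lambda$ only relaxes as $\gamma$ grows, the argument can be re-entered at the increased exponent; iterating, the weight advances by a uniformly positive amount each stage and so reaches every $m>0$. I regard this differential-inequality step — and the verification that its rate $\nu$ does not collapse as $\gamma\to\infty$ (which follows since $c_1\sim\gamma$ makes $\mathcal{Q}$ \emph{more} coercive while the boundary bound stays controlled) — as the crux of the proof.
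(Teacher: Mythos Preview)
Your argument is correct, and the first half---specializing Proposition~3.3 with $\rho\equiv 0$, $\psi_1(r)=r\widehat{A}(r)+\varepsilon_0$, and checking coercivity of the bulk via $(*_1)$--$(*_5)$ and the discriminant condition on $\lambda$---is essentially the paper's approach. The paper additionally mixes in $\alpha$ times Lemma~3.1; this is not for coercivity (indeed it only worsens the cross term) but to force the stray pieces $-(\partial_r u)^2-\tfrac{\gamma-\varepsilon}{2r}(\partial_r u)u-\tfrac{\alpha}{r}u^2$ of the $S(s)$ boundary to have a sign, leaving the clean one-line inequality
\[
\tfrac12\int_{S(s)} r^{\gamma}\bigl\{|\nabla u|^2-\lambda|u|^2\bigr\}\,dA \ \ge\ c_1\int_{B(s,\infty)} r^{\gamma-1}\bigl\{|\nabla u|^2+|u|^2\bigr\}\,dv_g.
\]
You avoid this device by keeping the boundary term as it is and feeding it into a differential inequality, which is fine.

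The bootstrap to arbitrary $m$ is where the two arguments genuinely diverge. You derive $J(s)\le C's\,(-J'(s))$, deduce polynomial decay $J(s)=O(s^{-\nu})$, upgrade the weight, and re-enter Proposition~3.3 at a larger exponent $\gamma'$; you correctly note that the same $\varepsilon_0$ remains admissible and that the $\lambda$--threshold only relaxes, and your check that $\nu$ does not collapse (since $c_*\to\min(c_2,c_3)>0$ while $C'$ stays bounded once $r\gtrsim\gamma$) is sound. The paper's route is simpler: it integrates the displayed inequality in $s$, rewrites the left side via Proposition~3.1 with $\psi=r^{\gamma}$, and applies Fubini to the right side to obtain
\[
2c_1\int_{B(t,\infty)}(r-t)\,r^{\gamma-1}\{\cdots\}\,dv_g \ \le\ \int_{S(t)} r^{\gamma}\{\cdots\}\,dA + \gamma\int_{B(t,\infty)} r^{\gamma-1}\{\cdots\}\,dv_g,
\]
which is finite by the first step. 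Integrating this inequality once more in $t$ (again Fubini) replaces $(r-t)$ by $\tfrac12(r-t)^2$, gaining exactly one power, and iteration yields every $m$. This avoids re-entering Proposition~3.3, tracking coercivity constants across stages, or worrying about the dependence of the threshold $R_1$ on $\gamma$. Your method buys a self-contained ODE picture; the paper's buys a mechanical one-power-per-integration scheme with no constants to chase.
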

\begin{rem}
{\rm The conditions $(21)$ and $(22)$ can be written as follows: 
\begin{align}
 \lambda > \frac{(\widehat{K}_3)^2}{\left\{ y_1 -\widehat{a} \right\}\left\{2\gamma - \widehat{b}- y_1 \right\}},
\end{align}
where $y_1= \min \left\{ \frac{2\gamma + \widehat{a} - \widehat{b}}{2}, 2(A_0-a) - \widehat{b} \right\}$.}
\end{rem}
\begin{proof}
We shall combine Proposition $3.3$ and Lemma $3.1$; put $ \psi_1 (r) = r \widehat{A}(r) + \varepsilon $ and $ \rho (r) = 0 $ in Proposition $3.3$; set $\beta = \gamma -1$ in Lemma $3.1$ and multiply $(19)$ by a positive constant $\alpha$. 
Then $v = u$ and $q = \lambda $ and 
\begin{align}
  & \int_{S(t)} r^{ \gamma }
    \left\{ \left( \frac{\partial u}{\partial r} \right)^2 
    + \frac{1}{2}\lambda |u|^2
    + \frac{ \gamma + r \widehat{A} (r)+ \varepsilon }{2r} 
    \frac{\partial u}{\partial r} u 
    + \frac{\alpha }{r}|u|^2 \right\} \,dA \\
  & + \int_{S(s)} r^{ \gamma }
    \left\{ \frac{1}{2}|\nabla u|^2 - \frac{1}{2}\lambda |u|^2 
    - \left( \frac{\partial u}{\partial r} \right)^2 
    - \frac{ \gamma + r \widehat{A} (r)+ \varepsilon }{2r}
    \frac{\partial u}{\partial r}u 
    - \frac{\alpha }{r}|u|^2 \right\} \,dA   \nonumber \\
\ge 
  & \int_{B(s,t)} r^{ \gamma - 1} \left\{ r(\nabla dr)(\nabla u,\nabla u)
    - \frac{1}{2} 
    \left( r\Delta r - r \widehat{A} (r) - \varepsilon \right)
    \widetilde{g} (\nabla u,\nabla u) \right\} \,dv_g \nonumber \\
  & + \int_{B(s,t)} r^{\gamma-1}
    \left\{ \gamma - \frac{1}{2} 
    \left( r\Delta r - r \widehat{A} (r) - \varepsilon \right) \right\}
    \left( \frac{\partial u}{\partial r} \right)^2 \,dv_g  \nonumber \\
  & + \int_{B(s,t)} r^{\gamma-1} 
    \left\{ \frac{\lambda }{2} 
    \left( r \Delta r - r \widehat{A} (r) - \varepsilon \right)
    + \alpha \left( \Delta r + \frac{\gamma -1}{r} \right) \right\} |u|^2 \,dv_g    \nonumber \\
  & + \int_{B(s,t)} r^{\gamma-1} 
    \left\{ r \widehat{A}\,'(r) + 2 \alpha + P(r) \right\}
    \frac{\partial u}{\partial r}u \,dv_g, \nonumber
\end{align}
where we set
\begin{align}
  P(r) = \left\{ \frac{(\gamma - 1)(n-1)}{2} 
  + 1 \right\} A(r) + \frac{( \gamma -1 )( \gamma + \varepsilon )}{2r} 
\end{align}
for simplicity. \\
Now, our assumptions $(*_9)$ and $(*_3)$ respectively imply that $ 2 \gamma - \widehat{b} > \widehat{a} $ and $ 2A_0 - 2a - \widehat{b} > \widehat{a} $. 
Hence, we can take $ - \varepsilon = -\varepsilon_0 $ so that
\begin{align}
  & 2 \gamma - \widehat{b} > (- \varepsilon_0) > \widehat{a};\\
  & 2 A_0 - 2a - \widehat{b} > (- \varepsilon_0).
\end{align}
Then, we see that 
\begin{align}
 & r(\nabla dr)(\nabla u,\nabla u) 
   - \frac{1}{2} 
   \left( r \Delta r - r \widehat{A} (r) - \varepsilon_0 \right)
   \widetilde{g} (\nabla u,\nabla u) \\
 & \hspace{20mm} \ge 
   \left\{ r A(r) - a 
   - \frac{r}{2} \left( \widehat{A}(r) + \frac{ \widehat{b} }{r} \right) 
   + \frac{r}{2} \widehat{A}(r) + \frac{ \varepsilon_0}{2} \right\} 
   \widetilde{g} (\nabla u,\nabla u) \nonumber \\ 
 & \hspace{20mm}\ge 
   \frac{1}{2} \{ 2 A_0 - 2a - \widehat{b} + \varepsilon_0 \} 
   \, \widetilde{g} (\nabla u,\nabla u), \nonumber 
\end{align}
where 
\begin{align}
  2 A_0 - 2a - \widehat{b} + \varepsilon_0 > 0 \qquad ({\rm by}~(28)).
\end{align}
Moreover, 
\begin{align}
 & \gamma 
   - \frac{1}{2} \left( r\Delta r - r \widehat{A} (r) - \varepsilon_0 \right)
   \ge \gamma 
   - \frac{r}{2} \left( \widehat{A}(r) + \frac{ \widehat{b} }{r} \right) 
   + \frac{r}{2} \widehat{A}(r) + \frac{\varepsilon_0}{2} \\
 & \hspace{39mm} 
   = \frac{1}{2}\{ 2 \gamma - \widehat{b} + \varepsilon_0 \} > 0 
   \qquad ({\rm by}~(27)) ;\nonumber 
\end{align}
and
\begin{align}
 & \frac{\lambda}{2} \left\{ r \Delta r - r \widehat{A} (r) - \varepsilon_0 
   + \alpha \left( \Delta r + \frac{\gamma -1}{r} \right) \right\} \\
\ge &
   \frac{\lambda}{2} \left\{ r \left( \widehat{A} (r) 
   - \frac{\widehat{a}}{r} \right) - r \widehat{A} (r) - \varepsilon_0 
   + \alpha \left( \widehat{A} (r) - \frac{\widehat{a}}{r} 
   + \frac{\gamma -1}{r} \right) \right\} \nonumber \\
 = &
   \frac{\lambda}{2} \left\{ - \widehat{a} - \varepsilon_0 
   + \alpha \left( \widehat{A} (r) - \frac{\widehat{a}}{r} 
   + \frac{\gamma -1}{r} \right) \right\} \nonumber \\
\ge & 
   \frac{\lambda}{2} \left\{ - \widehat{a} - \varepsilon_0 
   + \alpha \left( \widehat{A}_0 - \widehat{a} + \gamma -1 \right) \frac{1}{r}
   \right\} > 0 \qquad {\rm for}~r>>1 \quad ({\rm by}~(27)), \nonumber 
\end{align}
Now, from our assumption $(21)$, we take $ \alpha > 0 $ sufficiently small so that
\begin{align}
  ( - \widehat{a} - \varepsilon_0 ) ( 2\gamma - \widehat{b} + \varepsilon_0 ) 
  \lambda - ( \widehat{K}_3 + 2 \alpha )^2 > 0.
\end{align}
Then, a simple calculation using $(33)$ implies that there exists a constant $c_1=c_1(a,b,\gamma, K_3,\alpha) > 0$ such that 
\begin{align}
 & \frac{1}{2}\left( 2\gamma - \widehat{b} + \varepsilon_0 \right)
   \left(\frac{\partial u}{\partial r}\right)^2 
   + \frac{\lambda}{2}(-\widehat{a}-\varepsilon_0)|u|^2 
   + ( \widehat{K}_3 + 2\alpha)\frac{\partial u}{\partial r} u \\
 \ge 
 & 2c_1 \left\{ \left(\frac{\partial u}{\partial r}\right)^2 + |u|^2 \right\}.
   \nonumber 
\end{align}
Putting together $(*_5)$, $(*_{5'})$, $(25)$, $(26)$, $(29)$, $(30)$, $(31)$, $(32)$, and $(34)$, we see that there exist a constant $r_2=r_2(a,b,A_0,\gamma,\varepsilon_0,A(*)) > 0$ such that for any $ t > s \ge r_2$ the right hand side of $(25)$ is bounded from below by 
\begin{align*}
  c_1 \int_{B(s,t)} r^{\gamma-1} \bigl\{ |\nabla u|^2 + |u|^2 \bigr\} \,dv_g,
\end{align*}
that is, 
\begin{align}
 & \int_{S(t)} r^{ \gamma }
   \left\{ \left( \frac{\partial u}{\partial r} \right)^2 
   + \frac{1}{2}\lambda |u|^2
   + \frac{\gamma - \varepsilon }{2r} \frac{\partial u}{\partial r} u
   + \frac{\alpha }{r}|u|^2 
   \right\} \,dA \\
 & + \int_{S(s)} r^{ \gamma }
   \left\{ \frac{1}{2}|\nabla u|^2 - \frac{1}{2}\lambda |u|^2 
   - \left( \frac{\partial u}{\partial r} \right)^2 
   - \frac{\gamma - \varepsilon }{2r}\frac{\partial u}{\partial r}u 
   - \frac{\alpha }{r}|u|^2 
   \right\} \,dA   \nonumber \\
 \ge 
 & c_1 \int_{B(s,t)} r^{\gamma-1} 
    \bigl\{ |\nabla u|^2 + |u|^2 \bigr\} \,dv_g. \nonumber 
\end{align}
Besides, by Schwarz inequality, for $r \ge r_3 := \max\{ r_2, \frac{(\gamma - \varepsilon)^2}{4\alpha} \}$,
\begin{align*}
  - \left( \frac{\partial u}{\partial r} \right)^2 
  - \frac{\gamma - \varepsilon }{2r}\frac{\partial u}{\partial r}u 
  - \frac{\alpha }{r}|u|^2 
 \le 
  - \frac{1}{r} 
  \left\{ \alpha - \frac{ (\gamma - \varepsilon)^2 }{4r} \right\} |u|^2 
 \le 0,
\end{align*}
and moreover, $(20)$ implies that there exists a divergent sequence $\{t_i\}_{i=1}^{\infty}$ such that 
\begin{align*}
  \lim_{i \to \infty} \int_{S(t_i)} r^{ \gamma }
  \left\{ \left( \frac{\partial u}{\partial r} \right)^2 
  + \frac{1}{2}\lambda |u|^2
  + \frac{\gamma - \varepsilon }{2r} \frac{\partial u}{\partial r} u
  + \frac{\alpha }{r}|u|^2 
  \right\} \,dA = 0.
\end{align*}
Hence, substituting $t=t_i$ in $(35)$ and letting $ i \to \infty$, we get, for $s \ge r_3$, 
\begin{align}
  \frac{1}{2} \int_{S(s)} r^{\gamma}
  \left\{ |\nabla u|^2 - \lambda |u|^2 \right\} \,dA 
  \ge 
  c_1 \int_{B(s,\infty)} r^{\gamma-1} 
  \left\{ |\nabla u|^2 + |u|^2 \right\} \,dv_g.
\end{align}
Integrating this inequality with respect to $s$ over $[t,t_1]$ $(r_3 \le t < t_1)$, we have
\begin{align*}
 & 2 c_1 \int^{t_1}_t \,ds \int_{B(s,\infty)} r^{ \gamma - 1 }
   \left\{ |\nabla u|^2 + |u|^2 \right\} \,dv_g \\
 \le 
 & \int_{B(t,t_1)} r^{\gamma}
   \left\{ |\nabla u|^2 - q|u|^2 \right\} \,dv_g \\
 =
 & \left( \int_{S(t_1)} - \int_{S(t)} \right)
   r^{\gamma} \frac{\partial u}{\partial r}u \,dA
   - \gamma \int_{B(t,t_1)} r^{ \gamma - 1 }
   \frac{\partial u}{\partial r}u \,dv_g.
\end{align*}
In the last line, we have used the equation in Proposition $3.1$ with $\rho (r)=0$ and $ \psi = r^{\gamma} $. 
Since our assumption $(20)$ implies that 
\begin{align*}
  \liminf_{t_1\to \infty} \int_{S(t_1)} r^{\gamma}
  \frac{\partial u}{\partial r} u \,dA = 0,
\end{align*}
letting $t_1\to \infty$ and using Fubini's theorem, we have 
\begin{align}
 & 2 c_1 \int^{\infty}_t \,ds \int_{B(s,\infty)} r^{\gamma-1}
   \left\{ |\nabla u|^2 + |u|^2 \right\} \,dv_g \\
= 
 & 2 c_1 \int_{B(t,\infty)} (r-t) r^{\gamma-1}
   \left\{ |\nabla u|^2 + |u|^2 \right\} \,dv_g \nonumber \\
   \le 
 & \int_{S(t)} r^{\gamma}
   \left\{ \left( \frac{\partial u}{\partial r} \right)^2 + |u|^2 
   \right\} \,dA
   + \gamma \int_{B(t,\infty)} r^{\gamma-1}
   \left\{ \left( \frac{\partial u}{\partial r} \right)^2 + |u|^2 \right\}
   \,dv_g < \infty, \nonumber 
\end{align}
where the right hand side of this inequality is finite by $(36)$. 
Hence we see that the desired assertion $(23)$ holds for $m = \gamma$. 

Next, integrating this inequality $(37)$ with respect to $t$ over 
$[t_1,\infty)~(t_1\ge r_3)$ and using Fubini's theorem, we get
\begin{align*}
 & 2 c_1 \int_{B(t,\infty)} (r-t)^2 r^{\gamma - 1} 
   \left\{ |\nabla u|^2 + |u|^ 2 \right\} \,dv_g \\
 \le 
 & \int_{B(t,\infty)} r^{\gamma}
   \left\{ \left( \frac{\partial u}{\partial r} \right)^2 + |u|^2 \right\} 
   \,dv_g
   + \gamma \int_{B(t,\infty)} (r-t) r^{\gamma - 1}
   \left\{ \left( \frac{\partial u}{\partial r} \right) ^2 + |u|^2 \right\} 
   \,dv_g \\
 < 
 & \infty,
\end{align*}
where the right hand side of this inequality is finite by $(37)$. 
Thus, we see that the desired assertion $(23)$ holds for $ m = \gamma + 1 $. 
Repeating the integration with respect to $t$ shows that the assertion $(23)$ is valid for $m = \gamma + 2, \gamma + 3, \cdots $, therefore, for any $m > 0$. 
\end{proof}
\section{Exponential decay}
\begin{prop}
To the assumptions in Proposition $4.1$, we shall add the following two assumptions$:$
\begin{align}
 & \frac{ \widehat{B}_0 }{ \sqrt{r} } 
   \ge \widehat{A}(r)  \qquad {\rm for}~r\ge r_0 ; \tag{$*_6$} \\
 & {\rm Ric}_g (\nabla r, \nabla r) \ge - \frac{ \widehat{b}_1 }{r}
   \qquad {\rm on}~~B(r_0,\infty), \tag{$*_7$} 
\end{align}
where $B_0$ and $ b_1 $ are positive constants. 
Then, we have 
\begin{align*}
  \int_{B(r_0,\infty)} e^{ \eta r } 
  \left\{ |u|^2 + |\nabla u|^2 \right\} \,dv_g < \infty \qquad 
  {\rm for~any}~0 < \eta < \eta_1 ( \lambda, a, b ) ,
\end{align*}
where we set
\begin{align*}
  & \eta_1 ( \lambda, a, b ) = 
 \begin{cases}
    \displaystyle \frac{ - c_6 + \sqrt{ ( c_6 )^2 + 4 c_7 c_0 \lambda } }{c_0} 
      & \text{if} \quad c_0 > 0, \vspace{1mm} \\
    \displaystyle \frac{ c_7 }{ c_6 } \lambda 
      & \text{if} \quad c_0 \le 0 ;
 \end{cases} \\
  & c_0 = 2 - A_0 + ( n + 1 )a + \widehat{b} ;~
    c_6 = \widehat{b}_1 + \frac{(\widehat{B}_0)^2}{8}; \\
  & c_7 = 2(A_0 -a) - \widehat{a} - \widehat{b} ~(>0).
\end{align*}
\end{prop}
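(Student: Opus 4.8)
The plan is to promote the ``faster than polynomial'' decay of Proposition $4.1$ to genuine exponential decay by feeding an \emph{exponential} weight into the weighted Rellich--Pohozaev identity of Proposition $3.3$, and to read off the optimal rate from a coercivity requirement on the resulting bulk quadratic form. Fixing $0 < \eta < \eta_1(\lambda, a, b)$, the natural attempt is $\rho(r) = \frac{\eta}{2}\,r$, so that $v = e^{\rho(r)}u = e^{\eta r/2}u$ carries the target weight $e^{\eta r}$ in $|v|^2$. With this choice $\rho' = \frac{\eta}{2}$ and $\rho'' = 0$, so the effective potential is
\begin{align*}
  q = \frac{\eta^2}{4} - \frac{\eta}{2}\Delta r + \lambda,
\end{align*}
and, after invoking the radial Riccati equation $\partial_r \Delta r = -|\nabla dr|^2 - \mathrm{Ric}_g(\nabla r, \nabla r)$, one finds $r\,\partial_r q = \frac{\eta}{2}\,r\bigl(|\nabla dr|^2 + \mathrm{Ric}_g(\nabla r, \nabla r)\bigr)$. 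This is exactly the place where the two new hypotheses $(*_6)$ and $(*_7)$ will enter.

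Because $e^{\eta r/2}$ ruins the integrability that kills the surface terms at infinity, I would not use $\rho(r) = \frac{\eta}{2}r$ directly but the regularized exponent $\rho_\delta(r) = \frac{\eta}{2}\,\frac{r}{1+\delta r}$ for $\delta > 0$. Since $\rho_\delta$ is bounded, $v_\delta = e^{\rho_\delta(r)}u$ inherits the full polynomial decay of $u$ supplied by Proposition $4.1$; consequently, exactly as in the proof of Proposition $4.1$, the boundary integrals over $S(t_i)$ vanish along a suitable divergent sequence $t_i \to \infty$, and Proposition $3.3$ (with $\psi_1(r) = r\widehat{A}(r) + \varepsilon$, augmented by a multiple of Lemma $3.1$) yields a clean inequality of the type of equation $(23)$. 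One then extracts a bound on $\int_{B(r_0,\infty)} e^{2\rho_\delta(r)}\{|u|^2 + |\nabla u|^2\}\,dv_g$ that is uniform in $\delta$, and lets $\delta \downarrow 0$; since $e^{2\rho_\delta(r)} \uparrow e^{\eta r}$ monotonically, the monotone convergence theorem delivers the asserted finiteness of $\int_{B(r_0,\infty)} e^{\eta r}\{|u|^2 + |\nabla u|^2\}\,dv_g$.

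The heart of the matter is the coercivity analysis, which is what forces the precise threshold $\eta_1$. Substituting $\rho_\delta$ into Proposition $3.3$, the bulk integrand is a quadratic form in $\partial_r v$, the tangential part of $\nabla v$, and $v$. The Hessian estimate $(*_1)$ controls $r(\nabla dr)(\nabla v, \nabla v)$ (as in Proposition $4.1$), the term $2r\rho'_\delta = \eta r/(1+\delta r)^2$ furnishes the decisive positive contribution to the coefficient of $(\partial_r v)^2$, and the potential supplies $\frac12 q(r\Delta r - \psi_1) + \frac12 r\,\partial_r q$ for $|v|^2$. Bounding $r\,\partial_r q$ below through $(*_7)$ (so that $r\,\mathrm{Ric}_g(\nabla r, \nabla r) \ge -\widehat{b}_1$) and absorbing the $(\partial_r v)$--$v$ cross terms created by the exponential weight by Cauchy--Schwarz against the large $(\partial_r v)^2$ coefficient (this is where $(*_6)$ controls the growing factor $r\widehat{A}(r)$ and produces the $\frac{(\widehat{B}_0)^2}{8}$ contribution), the entire form becomes positive definite precisely when
\begin{align*}
  c_0\,\eta^2 + 2c_6\,\eta - 4 c_7 \lambda < 0 .
\end{align*}
For $c_0 > 0$ the positive root of the left-hand side is exactly the stated $\eta_1(\lambda, a, b)$, while the degenerate case $c_0 \le 0$ reduces to the corresponding linear bound $\eta < \frac{c_7}{c_6}\lambda$. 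Here $c_7 = 2(A_0 - a) - \widehat{a} - \widehat{b} > 0$ is the coercivity gap already used in Proposition $4.1$ (it is what made the $|u|^2$ coefficient positive there), $c_6 = \widehat{b}_1 + \frac{(\widehat{B}_0)^2}{8}$ packages the Ricci and second-fundamental-form bounds, and $c_0$ collects the $\eta^2$-contributions coming from the $\frac{\eta^2}{4}$ in $q$.

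I expect the main obstacle to be precisely this coercivity bookkeeping: keeping every $\delta$-dependent error uniformly controlled, checking that the lower-order $O(1/r)$ terms in the coefficients are harmless for $r$ large (just as in Proposition $4.1$), and tracking the constants sharply enough to obtain the optimal rate $\eta_1$ rather than some weaker one. By contrast the limiting step $\delta \downarrow 0$ is routine once the uniform estimate is in hand, and the vanishing of the surface terms at infinity comes for free from the all-orders polynomial decay of Proposition $4.1$.
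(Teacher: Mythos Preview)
Your regularized-exponential-weight scheme is a legitimate and well-known way to upgrade polynomial to exponential decay, but it is \emph{not} what the paper does, and the coercivity bookkeeping you assert does not produce the stated threshold $\eta_1$.

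The paper never puts an exponential weight into Proposition~3.3. It keeps the \emph{polynomial} weight $\rho(r)=m\log r$ (so $v=r^{m}u$) with $\gamma=1$ and $\psi_1=r\widehat A(r)+\varepsilon$, adds a multiple of Lemma~3.1, and after Cauchy--Schwarz arrives at an inequality of the shape
\[
  (\text{boundary at }S(s))\ \ge\ \int_{B(s,\infty)}\Bigl\{(1-\theta)c_7\lambda-\tfrac{m}{r}\,c_4-\tfrac{m^2}{r^2}(c_3+\theta)\Bigr\}|v|^2\,dv_g .
\]
The key extra steps, with no analogue in your sketch, are: multiply by $s^{-2m}$, integrate in $s$, and feed in Lemma~3.2. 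This produces a first-order differential inequality for $F(x)=x\int_{S(x)}|u|^2\,dA$. One then \emph{couples} $m$ to the base point via $m/x=c_5$, where $c_5$ is the positive root of $(c_3+\theta)y^2+c_4y-(1-\theta)c_7\lambda=0$; the inequality collapses to $F'(x)\le -2(1-\theta)^2c_5\,F(x)$, giving exponential decay of $F$ at rate $2(1-\theta)^2c_5\to\eta_1$ as $\theta\downarrow 0$. The bound on $\int e^{\eta r}|\nabla u|^2$ is obtained afterwards from Proposition~3.1.

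This is also where your coercivity claim fails. The constant $c_0=2-c_7$ in the statement comes from the $-2m^2/r^2$ piece of $r\partial_r q$, which is generated by $2r\rho'\rho''$ with $\rho''=-m/r^2\neq 0$ for the logarithmic weight. With your choice $\rho_\delta'\to\eta/2$ and $\rho_\delta''\to 0$, that term is absent: the only quadratic-in-$\eta$ contribution to the $|v|^2$-coefficient is $\tfrac{\eta^2}{4}(r\Delta r-\psi_1)\ge \tfrac{c_7}{4}\eta^2$, a \emph{gain}, and the positivity condition you actually get has leading coefficient $-c_7$, not $c_0$. Your sentence that ``$c_0$ collects the $\eta^2$-contributions coming from the $\tfrac{\eta^2}{4}$ in $q$'' is therefore incorrect, and the sharp constant $\eta_1(\lambda,a,b)$ of the proposition does not fall out of the direct exponential-weight identity. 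Your strategy would yield \emph{some} exponential decay range for $\eta$, but not the one asserted; to recover $\eta_1$ one needs the $m\!\leftrightarrow\!x$ coupling that the paper uses.
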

\begin{proof}
In Proposition $3.3$, let 
\begin{align}
  & \rho (r) = m\log r ~~( m \ge  1, \widehat{b} ); ~~
    \gamma = 1; ~~\psi_1(r) = r \widehat{A}(r) + \varepsilon; \nonumber \\
  & \varepsilon = - 2 A_0 + 2a + \widehat{b} .
\end{align}
Then, 
\begin{align}
   v  
 & = r^m u; \nonumber \\
   q  
 & = \frac{m^2}{r^2} + \frac{m}{r^2} - \frac{m}{r}\Delta r + \lambda \\
   \ge 
 & \lambda 
   + \frac{m^2}{r^2} \left( 1 - \frac{ \widehat{b} - 1 }{m} \right) 
   - \frac{m}{r} \widehat{A}(r) ; \nonumber \\
   r \frac{\partial q}{\partial r} 
 & = - \frac{2m^2}{r^2} - \frac{2m}{r^2} + \frac{m}{r}\Delta r 
   - m \frac{\partial (\Delta r)}{\partial r}  ; \nonumber \\
   - \frac{\partial (\Delta r)}{\partial r}
 & = |\nabla dr|^2 + \mathrm{Ric}_g (\nabla r,\nabla r) \tag{$*_{10}$} \\
 & \ge 
   (n-1)\left\{ A(r) - \frac{a}{r} \right\}^2 
   - \frac{ \widehat{b}_1 }{r} \nonumber \\
 & = \widehat{A}(r) A(r) - \frac{ 2a \widehat{A}(r) + \widehat{b}_1 }{r} 
   + \frac{ \widehat{a} a }{r^2} \nonumber \\
 & \ge - \frac{ \widehat{b}_1 + P_1(r)}{r} , \nonumber
\end{align}
where we have used the identity $ - \frac{\partial (\Delta r)}{\partial r} = |\nabla dr|^2 + \mathrm{Ric}\,(\nabla r,\nabla r)$ (see \cite{K3}, Proposition $2.3$), and also, we set 
\begin{align*}
  P_1(r) = \frac{ a(2\widehat{A}_0-\widehat{a}) + \widehat{A}_0A_0 }{r}
\end{align*}
for simplicity. 
Note that $\lim_{r \to \infty} P_1(r) = 0$. \\
Moreover, by $(*_3)$,
\begin{align}
   r \Delta r - \psi_1(r) 
 & \ge r \widehat{A}(r) - \widehat{a} - r \widehat{A}(r) - \varepsilon 
   \nonumber \\
 & = - \widehat{a} - \varepsilon \\
 & = 2(A_0 -a) - \widehat{a} - \widehat{b} > 0 .\nonumber 
\end{align}
Hence, we have 
\begin{align*}
  r \frac{\partial q}{\partial r} 
  \ge 
  - \frac{2m^2}{r^2} \left( 1 + \frac{1}{m} \right) 
  - \frac{m}{r} \left( \widehat{b}_1 - \widehat{A}(r) 
  + \frac{\widehat{a}}{r} + P_1(r) \right) 
\end{align*}
and
\begin{align*}
  q \bigl( r \Delta r - \psi_1(r) \bigr) 
  \ge 
& \left\{ \lambda 
  + \frac{m^2}{r^2} \left( 1 - \frac{ \widehat{b} - 1 }{m} \right) 
  - \frac{m}{r} \widehat{A}(r)
  \right\} ( - \widehat{a} - \varepsilon ) \\
  = 
& ( - \widehat{a} - \varepsilon  ) \lambda 
  - \frac{m}{r} ( - \widehat{a} - \varepsilon ) \widehat{A}(r) 
  + \frac{m^2}{r^2} \left( 1 - \frac{ \widehat{b} - 1 }{m} \right) 
  ( - \widehat{a} - \varepsilon ) .
\end{align*}
Therefore,
\begin{align*}
  r \frac{\partial q}{\partial r} + q( r \Delta r - \psi_1(r) ) 
\ge 
  ( - \widehat{a} - \varepsilon ) \lambda 
  - \frac{m}{r} \left( \widehat{b}_1 + P_2 (r) \right) - \frac{m^2}{r^2} c_1(m),\end{align*}
where we set 
\begin{align}
 & P_2 (r) = ( - \widehat{a} - \varepsilon - 1 ) \widehat{A}(r) 
   + \frac{\widehat{a}}{r} + P_1(r); \nonumber \\
 & c_1(m) = 2 - ( - \widehat{a} - \varepsilon ) + \frac{1}{m} 
   \left\{ 2 + ( \widehat{b} - 1 )( - \widehat{a} - \varepsilon) \right\}.
\end{align}
Note that 
\begin{align*}
  - \widehat{a} - \varepsilon > 0~({\rm see}~(40));
  \quad \lim_{r \to \infty}  P_2 (r) = 0.
\end{align*}
We do not know the sign of the number $ c_0 = 2 - ( - \widehat{a} - \varepsilon ) = 2 - A_0 + ( n + 1 ) a + \widehat{b} $, and hence, for any $\theta \in (0,1)$, let us set
\begin{align}
  c_3 = \max \{ \theta, c_0 \} ~(>0). 
\end{align}
Then, 
\begin{align}
  c_1(m) \le c_3 + \theta \qquad {\rm for}~~m \ge m_1( A_0, a, b) ,
\end{align}
and hence, for $m \ge m_1$, 
\begin{align}
  r \frac{\partial q}{\partial r} + q( r \Delta r - \psi_1(r) ) 
 \ge 
  ( - \widehat{a} - \varepsilon ) \lambda 
  - \frac{m}{r} \left( \widehat{b}_1 + P_2 (r) \right) 
  - \frac{m^2}{r^2} ( c_3 + \theta ).
\end{align}
Besides, 
\begin{align}
 & r ( \nabla dr )(\nabla v, \nabla v) 
   - \frac{1}{2} \left( r\Delta r - r\widehat{A}(r) - \varepsilon \right) 
   \widetilde{g}(\nabla v, \nabla v) \\
   \ge 
 & \left\{ rA(r) - a 
   - \frac{1}{2} 
   \left( r\widehat{A}(r) + \widehat{b} - r\widehat{A}(r) 
   - \varepsilon \right) \right\}
   \widetilde{g}(\nabla v, \nabla v) \nonumber \\
   = 
 & \frac{1}{2} 
   \left\{ 2r A(r) - 2a - \widehat{b} + \varepsilon \right\}
   \widetilde{g}(\nabla v, \nabla v) \nonumber \\
   \ge
 & \frac{1}{2} 
   \left\{ 2A_0 - 2a - \widehat{b} + \varepsilon \right\}
   \widetilde{g}(\nabla v, \nabla v)
   = 0 \qquad ({\rm by}~(38)) \nonumber 
\end{align}
and 
\begin{align}
 & 1 - \frac{1}{2} 
   \left( r \Delta r - r \widehat{A}(r) - \varepsilon \right) + 2m \\
   \ge 
 & 1 - \frac{1}{2} 
   \left( 
   r \widehat{A}(r) + \widehat{b} - r \widehat{A}(r) - \varepsilon 
   \right)  + 2m  \nonumber \\
   = 
 & 2m + 1 - \frac{1}{2} ( \widehat{b} - \varepsilon) 
   = 2m + 1 - A_0 + a > 0 \nonumber \\
 & \hspace{30mm}  {\rm for}~~ 
   m > m_2 := 
   \max \left\{ m_1, \frac{ A_0 - a -1 }{2} \right\}. \nonumber 
\end{align}
Hence, by Proposition $3.3$, we have 
\begin{align}
 & \int_{S(t)} r \left\{ \left( \frac{\partial v}{\partial r} \right)^2 
   + \frac{1}{2} q|v|^2 
   + \frac{ 1 + \varepsilon + r \widehat{A}(r) }{2r} 
   \frac{\partial v}{\partial r}v \right\} \,dA \\
 & + \frac{1}{2} \int_{S(s)} r \bigl\{ |\nabla v|^2 - q |v|^2 \bigr\} \,dA 
   - \int_{S(s)} r \left\{ \left( \frac{\partial v}{\partial r} \right)^2 
   + \frac{ 1 + \varepsilon + r \widehat{A}(r) }{2r} 
   \frac{\partial v}{\partial r}v \right\} \,dA  \nonumber \\
 \ge \, 
 & \frac{1}{2} \int_{B(s,t)} \left\{ ( - \widehat{a} - \varepsilon ) \lambda 
   - \frac{m}{r} \left( \widehat{b}_1 + P_2 (r) \right) 
   - \frac{m^2}{r^2} ( c_3 + \theta ) \right\} 
   |v|^2 \,dv_g \nonumber \\
 & + ( 2m + 1 - A_0 + a ) \int_{B(s,t)}  
   \left( \frac{\partial v}{\partial r} \right)^2 \,dv_g \nonumber \\
 & + \int_{B(s,t)} 
   \left\{ \frac{m}{r} \left( 1 + \varepsilon + r \widehat{A}(r) \right) 
   + \frac{1}{2} \left( \widehat{A}(r) + r \widehat{A}\,'(r) \right) \right\}
   \frac{\partial v}{\partial r}v \,dv_g \nonumber 
\end{align}
for $m > m_2$. 
On the other hand, Lemma $3.1$ with $ \beta = 0 $ yields 
\begin{align}
   \left( \int_{S(t)} - \int_{S(s)} \right) |v|^2 \,dA
 = 
 & \int_{B(s,t)}  
   \left\{ \left( \Delta r \right) |v|^2
   + 2v\frac{\partial v}{\partial r} \right\} \,dv_g \\
 \ge 
 & \int_{B(s,t)}  
   \left\{ \left( \widehat{A}(r) - \frac{ \widehat{a} }{r} \right) |v|^2
   + 2v\frac{\partial v}{\partial r} \right\} \,dv_g \nonumber 
\end{align}
Multiplying this inequality $(48)$ by a constant 
\begin{align}
  \alpha 
  \in 
  \left( \frac{ ( \widehat{B}_0 )^2 }{16} + 1, 
  \frac{ ( \widehat{B}_0 )^2 }{16} + 2 \right)
\end{align}
and addition of it to $(47)$ make 
\begin{align}
 & \int_{S(t)} r  
   \left\{ \left( \frac{\partial v}{\partial r} \right)^2 
   + \frac{1}{2}q|v|^2 
   + \frac{ 1 + \varepsilon + r \widehat{A}(r) }{2r} 
   \frac{\partial v}{\partial r}v 
   + \frac{\alpha}{r}|v|^2 \right\} \,dA \\ 
 & + \frac{1}{2} \int_{S(s)} r \bigl\{ |\nabla v|^2 - q|v|^2 \bigr\} \,dA 
   \nonumber \\
 & - \int_{S(s)} r \left\{ \left( \frac{\partial v}{\partial r} \right)^2 
   + \frac{ 1 + \varepsilon + r \widehat{A}(r) }{2r} 
   \frac{\partial v}{\partial r}v 
   + \frac{ \alpha }{r}|v|^2 \right\} \,dA  \nonumber \\
 \ge \, 
 & \frac{1}{2} \int_{B(s,t)} 
   \left\{ ( - \widehat{a} - \varepsilon ) \lambda 
   + \alpha \widehat{A}(r) - \frac{ \alpha \widehat{a} }{r} 
   - \frac{m}{r} \left( \widehat{b}_1 + P_2 (r) \right) 
   - \frac{m^2}{r^2} ( c_3 + \theta ) \right\} 
   |v|^2 \,dv_g  \nonumber \\
 & + ( 2m + 1 - A_0 + a ) \int_{B(s,t)} 
   \left( \frac{\partial v}{\partial r} \right)^2 \,dv_g 
   + \int_{B(s,t)} P_3( \alpha, m, r ) 
   \frac{\partial v}{\partial r} v \,dv_g \nonumber 
\end{align}
for $m > m_2$, where we set 
\begin{align*}
  P_3( \alpha, m, r ) 
  = 2 \alpha + \frac{1}{2} \left( r \widehat{A}\,'(r) +  \widehat{A}(r) \right)
  + \frac{m}{r} \left( 1 + \varepsilon + r \widehat{A}(r) \right) 
\end{align*}
for simplicity. 
Substituting the inequality
\begin{align*}
  P_3( \alpha, m, r ) \frac{\partial v}{\partial r} v 
  \ge 
  - ( 2m + 1 - A_0 + a ) \left( \frac{\partial v}{\partial r} \right) ^2 
  - \frac{ P_3( \alpha, m, r )^2 }{ 4 ( 2m + 1 - A_0 + a ) } |v|^2 ,
\end{align*}
into $(50)$, we get 
\begin{align}
 & \int_{S(t)} r  
   \left\{ \left( \frac{\partial v}{\partial r} \right)^2 
   + \frac{1}{2}q|v|^2 
   + \frac{1}{2}\left( \frac{ 1 + \varepsilon }{r} +  \widehat{A}(r) \right) 
   \frac{\partial v}{\partial r}v + \frac{\alpha}{r}|v|^2 \right\} \,dA \\ 
 & + \frac{1}{2} \int_{S(s)} r \bigl\{ |\nabla v|^2 - q|v|^2 \bigr\} \,dA 
   \nonumber \\
 & - \int_{S(s)} r \left\{ \left( \frac{\partial v}{\partial r} \right)^2 
   + \frac{ 1 + \varepsilon + r \widehat{A}(r) }{2r} 
   \frac{\partial v}{\partial r}v 
   + \frac{ \alpha }{r}|v|^2 \right\} \,dA \nonumber \\
 \ge \, 
 & \int_{B(s,t)} H( \alpha, r, m ) \,|v|^2 \,dv_g , \nonumber 
\end{align}
where we set 
\begin{align*}
 & H( \alpha, r, m ) \nonumber \\
   = 
 & ( - \widehat{a} - \varepsilon ) \lambda + \alpha \widehat{A}(r) 
   - \frac{ \alpha \widehat{a} }{r}
   - \frac{ P_3( \alpha, m, r )^2 }{ 4 ( 2m + 1 - A_0 + a ) } 
   - \frac{m}{r} \left( \widehat{b}_1 + P_2(r) \right) 
   \nonumber \\
 & - \frac{m^2}{r^2} \left( c_3 + \theta \right) \nonumber \\
   = 
 & ( - \widehat{a} - \varepsilon ) \lambda 
   - \frac{ \left( 4 \alpha + r \widehat{A}\,'(r) +  \widehat{A}(r) \right)^2 }
   { 16 ( 2m + 1 - A_0 + a ) } + P_5(r,\alpha,m) \\
 & - \frac{m}{r} 
   \left\{ \widehat{b}_1 
   + \frac{ r ( \widehat{A}(r) )^2 }{ 4 ( 2 + \frac{1 - A_0 + a}{m} ) } 
   + P_6(r) 
   \right\} - \frac{m^2}{r^2} \left( c_3 + \theta \right) 
\end{align*}
and 
\begin{align*}
 & P_5(r,\alpha,m) = \alpha \widehat{A}(r) - \frac{ \alpha \widehat{a} }{r} 
   - \frac{ 4 \alpha + r \widehat{A}\,'(r) +  \widehat{A}(r) }
   { 4 \left( 2 + \frac{ 1 - A_0 + a }{m} \right) } 
   \left( \frac{ 1 + \varepsilon}{r} + \widehat{A}(r) \right) ; \\
 & P_6(r,m) = \frac{1}{ 4 ( 2 + \frac{1 - A_0 + a}{m})} 
   \left\{ 
   \frac{ (1 + \varepsilon)^2 }{r} + 2 (1 + \varepsilon) \widehat{A}(r)
   \right\} + P_2(r) .
\end{align*}
Here, note the following: $\lim_{r\to \infty}P_5(r,\alpha,m)=0$ uniformly for $\alpha \in \left( \frac{ ( \widehat{B}_0 )^2 }{16} + 1, \frac{ ( \widehat{B}_0 )^2 }{16} + 2 \right)$ and $m>m_2$ because $\bigl|r \widehat{A}\,'(r) \bigr| \le \widehat{K}_3$; $\lim_{ r \to \infty } P_6(r,m) = 0$ uniformly for $m > m_2$; $r( \widehat{A}(r) )^2 \le ( \widehat{B}_0 )^2$. 
Therefore, for any $\theta \in (0,1)$, if we take $r_4=r_4(A_0,a,A(*),b,K_3,\lambda)$ and $m_3 = m_3 (A_0,a,\widehat{B}_0)~( > m_2 )$ sufficiently large, we have 
\begin{align*}
 & H( \alpha, r, m ) 
   \ge 
   ( 1 - \theta ) ( - \widehat{a} - \varepsilon ) \lambda 
   - \frac{m}{r} c_4 - \frac{m^2}{r^2} \left( c_3 + \theta \right) \\
 & \hspace{30mm} {\rm for}~~\alpha \in 
  \left( \frac{ ( \widehat{B}_0 )^2 }{16} + 1, 
  \frac{ ( \widehat{B}_0 )^2 }{16} + 2 \right) ,~r \ge r_4,~
   {\rm and}~m \ge m_3,
\end{align*}
where we set 
\begin{align}
  c_4 =  \widehat{b}_1 + \frac{( 1 + \theta ) ( \widehat{B}_0 )^2}{8}.
\end{align}
Since $\lim_{r \to \infty}q = \lambda$ by $(39)$, Proposition $4.1$ implies that $|\nabla v|$ and $v$ are in $L^2\bigl( B(r_0,\infty),dv_g \bigr)$, and hence, 
\begin{align*}
  \liminf_{t\to \infty} \int_{S(t)} r  
  \left\{ \left( \frac{\partial v}{\partial r} \right)^2 
  + \frac{1}{2}q|v|^2 
  + \frac{1}{2}\left( \frac{ 1 + \varepsilon }{r} +  \widehat{A}(r) \right) 
  \frac{\partial v}{\partial r}v + \frac{\alpha}{r}|v|^2 
  \right\} \,dA = 0.
\end{align*}
Therefore, substituting appropriate divergent sequence $\{t_i\}$ for $t$ in $(51)$, and letting $t_i\to \infty$, we get
\begin{align}
 & \int_{S(s)} r \left\{ |\nabla v|^2 - q|v|^2 \right\} \,dA 
   - 2 \int_{S(s)} r \left\{ \left( \frac{\partial v}{\partial r} \right)^2 
   + \frac{ 1 + \varepsilon + r \widehat{A}(r) }{2r} 
   \frac{\partial v}{\partial r}v 
   + \frac{ \alpha }{r}|v|^2 \right\} \,dA \\
 \ge 
 & 2 \int_{B(s,\infty)} 
   \left\{ ( 1 - \theta ) ( - \widehat{a} - \varepsilon ) \lambda 
   - \frac{m}{r} c_4 - \frac{m^2}{r^2} \left( c_3 + \theta \right) 
   \right\}|v|^2 \,dv_g. \nonumber
\end{align}
Multiplying both side of $(53)$ by $ s^{ - 2m } $ and integrating it with respect to $s$ over $[x,\infty)$ $( x > r_4 )$, we have
\begin{align}
 & \int_{B(x,\infty)} r^{ 1 - 2m } 
   \left\{ |\nabla v|^2 - q|v|^2 \right\} \,dv_g \\
 & - 2 \int_{B(x,\infty)} r^{ 1 - 2m }
   \left\{ \left( \frac{\partial v}{\partial r} \right)^2 
   + \frac{ 1 + \varepsilon + r \widehat{A}(r) }{2r} 
   \frac{\partial v}{\partial r}v 
   + \frac{ \alpha }{r}|v|^2 \right\} \,dv_g \nonumber \\
 \ge 
 & 2 \int_{x}^{\infty} s^{ - 2m } \,ds 
   \int_{B(s,\infty)} 
   \left\{ ( 1 - \theta ) ( - \widehat{a} - \varepsilon ) \lambda 
   - \frac{m}{r} c_4 - \frac{m^2}{r^2} \left( c_3 + \theta \right) \right\}
   |v|^2 \,dv_g  \nonumber \\
 \ge 
 & 2 \int_{x}^{\infty}
   \left\{ ( 1 - \theta ) ( - \widehat{a} - \varepsilon ) \lambda 
    - \frac{m}{s} c_4 
    - \frac{m^2}{s^2} \left( c_3 + \theta \right) \right\} 
   s^{-2m} \,ds \int_{B(s,\infty)}|v|^2 \,dv_g \nonumber \\
 \ge 
 & 2 \left\{ ( 1 - \theta ) ( - \widehat{a} - \varepsilon ) \lambda 
    - \frac{m}{x} c_4 
    - \frac{m^2}{x^2} \left( c_3 + \theta \right) \right\} 
   \int_{x}^{\infty} s^{-2m} \,ds \int_{B(s,\infty)} |v|^2 \,dv_g \nonumber 
\end{align}
for $m \ge m_3$. 
Substitution of the equation in Lemma $3.2$ into $(54)$ makes 
\begin{align}
 & - \frac{1}{2} \frac{d}{dx} \left( x^{ 1 - 2m } \int_{S(x)}|v|^2 
   \,dA \right) 
   - \frac{1}{2} \int_{S(x)} r^{-2m} 
   \left\{ 2m - 1 \right\} |v|^2 \,dA \\
 & + \frac{1}{2} \int_{S(x)} r^{1 - 2m} ( \Delta r ) |v|^2 \,dA \nonumber \\
 & - \int_{B(x,\infty)} r^{1-2m} 
   \left\{ 2 \left( \frac{\partial v}{\partial r} \right)^2 
   + \left( \frac{ 2 + \varepsilon }{r} + \widehat{A}(r) \right) 
   \frac{\partial v}{\partial r}v 
   + 2 \frac{\alpha }{r}|v|^2 \right\} \,dv_g   \nonumber \\
 \ge 
 & 2 \left\{ ( 1 - \theta ) ( - \widehat{a} - \varepsilon ) \lambda 
   - \frac{m}{x} c_4 - \frac{m^2}{x^2}( c_3 + \theta ) \right\} 
   \int_{x}^{\infty} s^{-2m} \,ds \int_{B(s,\infty)} |v|^2 \,dv_g.\nonumber 
\end{align}
Here, by using $(*_6)$, the third term of the left hand side of $(55)$ is bounded as follows:
\begin{align*}
   \frac{1}{2} \int_{S(x)} r^{1 - 2m} ( \Delta r ) |v|^2 \,dA 
   \le 
 & \frac{1}{2} \int_{S(x)} r^{ \frac{1}{2} - 2m } 
   \left( \sqrt{r} \widehat{A}(r) + \frac{\widehat{b}}{\sqrt{r}} \right) 
   |v|^2 \,dA \\
   \le
 & \frac{ ( 1 + \theta) \widehat{B}_0 }{2\sqrt{x}} \int_{S(x)} r^{ 1 - 2m} 
   |v|^2 \,dA \quad {\rm for}~~
   x \ge r_5=\left( \frac{\widehat{b}}{\theta \widehat{B}_0} \right)^2.
\end{align*}
As for the fourth term of the left hand side of $(55)$, since $16 \alpha - ( \widehat{B}_0 )^2 > 1$ by $(49)$, we have
\begin{align*}
 & 2 \left( \frac{\partial v}{\partial r} \right)^2 + 
   \left( \frac{ 2 + \varepsilon }{r} + \widehat{A}(r) \right) 
   \frac{\partial v}{\partial r} v 
   + \frac{2\alpha}{r} |v|^2 \\ 
 \ge  
 & \frac{1}{2r} \left\{ \frac{ 2\alpha }{r} 
   - \frac{1}{8} \left( \frac{ 2 + \varepsilon }{r} 
   + \widehat{A}(r) \right)^2 \right\} |v|^2 \\
 = 
 & \frac{1}{8r^2} \left\{ 16 \alpha - r( \widehat{A}(r) )^2 + 
   2 ( 2 + \varepsilon )\widehat{A}(r) + \frac{( 2 + \varepsilon )^2}{r} 
   \right\} |v|^2 \\
 \ge 
 & \frac{1}{8r^2} \left\{ 16 \alpha - ( \widehat{B}_0 )^2 + 
   2 ( 2 + \varepsilon )\widehat{A}(r) + \frac{( 2 + \varepsilon )^2}{r} 
   \right\} |v|^2 \\ 
 \ge 
 & \frac{1}{8r^2} \left\{ 1 + 
   2 ( 2 + \varepsilon )\widehat{A}(r) + \frac{( 2 + \varepsilon )^2}{r} 
   \right\} |v|^2 \ge 0 \\
 & \hspace{60mm} {\rm for}~~r \ge r_6 = r_6( \varepsilon, \widehat{A}(*) )
   ~( \ge r_4).
\end{align*}
Therefore, for any $ m \ge m_4 = m_4( m_3, \theta ) $ and $ x \ge r_7 := \max\{ r_5, r_6 \} $, 
\begin{align}
 & - \frac{1}{2} \frac{d}{dx} \left (x^{1-2m} \int_{S(x)} |v|^2 \,dA \right)
   - \frac{ ( 1 - \theta  ) m }{x} \left( x^{ 1 - 2m } \int_{S(x)} |v|^2 
   \,dA \right) \\
 & + \frac{ ( 1 + \theta) \widehat{B}_0 }{2\sqrt{x}} 
   \left( x^{ 1 - 2m } \int_{S(x)} |v|^2 \,dA \right) \nonumber \\ 
 \ge 
 & 2 \left\{ ( 1 - \theta ) ( - \widehat{a} - \varepsilon ) \lambda 
   - \frac{m}{x} c_4 - \frac{m^2}{x^2}( c_3 + \theta ) \right\} 
   \int_{x}^{\infty} s^{-2m} \,ds \int_{B(s,\infty)} |v|^2 \,dv_g .\nonumber 
\end{align}
Now, let $c_5 >0$ be the solution of the quadratic equation $( 1 - \theta ) ( - \widehat{a} - \varepsilon ) \lambda - y c_4 - y^2( c_3 + \theta ) = 0$, 
that is, 
\begin{align}
  c_5 = \frac{ -c_4 + \sqrt{ c_4^2 + 4 (1-\theta)(-\widehat{a}-\varepsilon)(c_3+\theta)\lambda} }{ 2( c_3 + \theta ) }.
\end{align}
Moreover, for $ m \ge m_3 $ and $ x \ge r_7 $, we shall set 
\begin{align}
  \frac{m}{x} = c_5 >0
\end{align}
and 
\begin{align*} 
   F(x) = x^{1-2m} \int_{S(x)} |v|^2 \, dA = x \int_{S(x)} |u|^2 \,dA .
\end{align*}
Then, $(56)$ implies that 
\begin{align*} 
  - \frac{1}{2} F'(x) - ( 1 - \theta ) c_5 F(x) 
  + \frac{ ( 1 + \theta) \widehat{B}_0 }{2\sqrt{x}} F(x) \ge 0,
\end{align*}
and hence, 
\begin{align} 
  F'(x) \le - \left\{ 2( 1 - \theta ) c_5 
  - \frac{ ( 1 + \theta) \widehat{B}_0 }{2\sqrt{x}} \right\} F(x) 
  \le - 2( 1 - \theta )^2 c_5 F(x) 
\end{align}
for $x \ge r_8 = r_8( c_5, \theta, \widehat{B}_0 )~( \ge r_7 )$.
Thus, if we set $ G(x) = e^{ 2 ( 1 - \theta )^2 c_5 x} F(x) $, $(59)$ reduces to 
\begin{align*}
  G(x)' \le 0  \qquad {\rm for}~~x \ge r_8.
\end{align*}
Thus, 
$ G(x) \le G(r_8)$ for $x \ge r_8$, that is, 
\begin{align} 
  F(x) = x \int_{S(x)} |u|^2 \,dA 
  \le e^{ - 2( 1 - \theta )^2 c_5 x } G(r_8) \qquad {\rm for}~~x \ge r_8 .
\end{align}
On the other hand, in general, for $\beta > 0$ and $\delta > 0$,
\begin{align}
  \lim_{t \to +0} \frac{ \sqrt{ \beta^2 + 4\delta t} - \beta }{2t} 
  = \frac{\delta}{\beta}.
\end{align}
Now, in view of $(40)$, $(42)$, $(52)$, $(57)$, and $(61)$, we see that $(60)$ implies that 
\begin{align}
  \int_{B(r_0,\infty)} e^{ \eta r } |u|^2 \,dv_g < \infty 
  \qquad {\rm for~any}~~0 < \eta < \eta_1 (\lambda, a, b).
\end{align}

Next, we shall show that Proposition $3.1$ and $(62)$ yield 
\begin{align*}
  \int_{B(r_0,\infty)} e^{ \eta r } |\nabla u|^2 \,dv_g < \infty 
  \qquad {\rm for~any}~~0 < \eta < \eta_1 (\lambda, a, b).
\end{align*}
For that purpose, first consider the integral
\begin{align*}
  g(R)
  = 2 \int_{B(r_0,R)} e^{ \eta r } u \frac{\partial u}{\partial r} \,dv_g .
\end{align*} 
Then, Green's formula yields
\begin{align*}
   g(R)
= & \frac{1}{\eta} \int_{B(r_0,R)}
    \left\langle 
    \nabla \left( e^{ \eta r } \right),\nabla \left( u^2 \right) 
    \right\rangle \,dv_g \\
= & \frac{1}{\eta} \left( \int_{S(R)} - \int_{S(r_0)} \right)
    e^{ \eta r} |u|^2 \,dA 
    - \int_{B(r_0,R)} ( \Delta r + \eta ) e^{ \eta r }|u|^2 \,dv_g.
\end{align*}
Since $\lim_{r \to \infty } \Delta r = 0$, $(62)$ implies the existence of the limit, $\lim_{ R \to \infty }g(R)$. 
( Note that we do not assume $0 < \eta \le \eta_1 (\lambda, a, b)$ but assume $0 < \eta < \eta_1 (\lambda, a, b)$ ). 
In particular, 
\begin{equation}
   \liminf_{R\to \infty } e^{ \eta R}
   \left| \int_{S(R)} u \frac{\partial u}{\partial r} \,dA \right| = 0.
\end{equation}
In Proposition $3.1$, we put $\rho = 0$ and $\psi = e^{ \eta r}$. 
Then $v = u$, $ q = \lambda $, and 
\begin{align*}
  & \int_{B(r_0,R)} \left\{ |\nabla u|^2 - \lambda |u|^2 \right\} 
    e^{ \eta r} \,dv_g \\
= & \left( \int_{S(R)} - \int_{S(r_0)} \right)
    \frac{\partial u}{\partial r} u e^{ \eta r} \,dA
    -  \eta \int_{B(r_0,R)}
    e^{ \eta r} \frac{\partial u}{\partial r} u \,dv_g \\
\le 
  & \left( \int_{S(R)} - \int_{S(r_0)} \right)
    \frac{\partial u}{\partial r} u e^{ \eta r} \,dA
    + \frac{\eta}{2} \int_{B(r_0,R)}
    e^{ \eta r} \left\{ |\nabla u|^2 +  \eta ^2 |u|^2 \right\} \,dv_g.
\end{align*}
Hence,
\begin{align*}
  & \frac{1}{2} \int_{B(r_0,R)} e^{ \eta r} |\nabla u|^2 \,dv_g \\
 \le 
  & \left( \int_{S(R)} - \int_{S(r_0)} \right)
    \frac{\partial u}{\partial r} u e^{ \eta r} \,dA
    + \int_{B(r_0,R)}
    \left\{ \frac{ \eta ^2}{2} + \lambda \right\} e^{ \eta r} |u|^2 \,dv_g.
\end{align*}
Therefore, $(62)$ and $(63)$ imply that
\begin{equation}
  \int_{B(r_0,\infty)} e^{ \eta r} |\nabla u|^2 \,dv_g < \infty 
  \qquad {\rm for~any}~~0 < \eta < \eta_1 ( \lambda, a, b ) . 
\end{equation}
Thus, from $(62)$ and $(64)$, we get our desired result. 
\end{proof}
\section{Vanishing on some neighborhood of infinity}
\begin{prop}
Under the assumptions in Proposition $5.1$, let us add the following assumption$:$
\begin{align}
  \lambda > 
  \left\{ \frac{ 4\widehat{b}_1 + (\widehat{B}_0)^2 }
  { 8\left( 2(A_0 - a) - \widehat{a} - \widehat{b} \right) } \right\}^2 
  \tag{$*_8$}. 
\end{align}
Then, we have $u\equiv 0$ on $B(r_0,\infty)$. 
\end{prop}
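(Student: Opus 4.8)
The plan is to promote the exponential $L^2$-decay from Proposition 5.1 to an exponential decay with an arbitrarily large rate, and then to convert ``decay faster than any exponential'' into outright vanishing, as one does in unique-continuation-at-infinity arguments of Eidus--Kato--Mochizuki type. The hypothesis $(*_8)$ is exactly what makes the decay rate $\eta_1(\lambda,a,b)$ \emph{improvable}: under $(*_8)$ one has $\lambda$ large enough that the constant $c_5$ in $(44)$, and hence the exponential rate $2(1-\theta)^2 c_5$ produced in $(46)$--$(47)$, can be driven past any prescribed target. So the first step I would take is to re-run the differential-inequality machinery of Proposition 5.1 but now \emph{starting} from the exponential weight already established in $(49)$ and $(51)$, rather than from the mere polynomial decay of Proposition 4.1.

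Concretely, I would revisit the key inequality $(40)$ and the subsequent passage through Lemma 3.2 to the first-order differential inequality $(46)$ for $F(x)=x\int_{S(x)}|u|^2\,dA$. The point is that, with exponential $L^2$-integrability of $u$ and $\nabla u$ in hand, the boundary terms at infinity vanish for \emph{every} positive weight, so the inequality $F'(x)\le -2(1-\theta)^2 c_5\,F(x)$ is no longer tied to the threshold $\eta_1$. I would then show that under $(*_8)$ the quantity $c_5$ (equivalently, the achievable rate) can be taken arbitrarily large: examining $(44)$, $c_5$ grows like $\sqrt{\lambda}$, and the condition $(*_8)$ guarantees that the \emph{effective} rate $2(1-\theta)^2 c_5$ exceeds the weight $\eta$ one is willing to carry in the estimates, so that a bootstrap is self-sustaining. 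Iterating, I would conclude
\begin{align*}
  \int_{B(r_0,\infty)} e^{\eta r}\left\{ |u|^2 + |\nabla u|^2 \right\}\,dv_g < \infty
  \qquad \text{for every } \eta>0,
\end{align*}
i.e.\ faster-than-exponential decay.

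Having reached super-exponential decay, the final move is a Carleman-type or direct integral argument forcing $u\equiv 0$. I would pick a weight $e^{\eta r}$ with $\eta$ far larger than the natural rate $2\sqrt{\lambda}$ associated with the equation $\Delta u+\lambda u=0$, insert $\psi=e^{\eta r}$ (or a suitable $\rho(r)=\tfrac{\eta}{2}r$) into Proposition 3.1 or Proposition 3.3, and extract a lower bound of the form
\begin{align*}
  \int_{B(s,\infty)} e^{\eta r}\left\{ |\nabla u|^2 + |u|^2 \right\}\,dv_g
  \le C(s)\,e^{-\delta(\eta) s}
\end{align*}
with $\delta(\eta)\to\infty$ as $\eta\to\infty$; letting $\eta\to\infty$ on each fixed shell $B(s,t)$ then pins the integral to zero for all large $r$, whence $u\equiv 0$ on $B(r_1,\infty)$ for some $r_1$, and finally unique continuation for the elliptic equation $\Delta u+\lambda u=0$ propagates this back to all of $B(r_0,\infty)$.

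I expect the main obstacle to lie in the first step: verifying that $(*_8)$ is precisely the arithmetic condition under which the self-improvement loop closes. One must check that when the weight is raised from $\eta$ to some $\eta'>\eta$, the newly generated error terms—those coming from $\Delta r$, from the curvature lower bound $(*_7)$, and from the first-order cross term $P_3(\alpha,m,r)\,\tfrac{\partial v}{\partial r}v$ in $(37)$—remain dominated by the gain $(1-\theta)(-\widehat a-\varepsilon)\lambda$ in $H(\alpha,r,m)$, uniformly in the iteration. This uniformity is what $(*_8)$ buys: the comparison $\lambda>\bigl\{(4\widehat b_1+(\widehat B_0)^2)/(8(2(A_0-a)-\widehat a-\widehat b))\bigr\}^2$ is exactly the statement that the ``source'' term beats the accumulated ``leak'' of order $\widehat b_1$ and $(\widehat B_0)^2$, so that $c_5$ in $(44)$ stays bounded below by a positive multiple of $\sqrt\lambda$ no matter how large $\eta$ is pushed. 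Keeping track of the $\theta\to 0$ limit and the $m/x=c_5$ calibration $(45)$ through the bootstrap is the delicate bookkeeping I would treat most carefully.
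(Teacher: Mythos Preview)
Your proposal has a genuine gap in the bootstrap step. The quantity $c_5$ in $(44)$ is the positive root of a fixed quadratic whose coefficients depend only on $\lambda$, $a$, $b$, $A_0$, $B_0$, $b_1$, and the auxiliary parameter $\theta\in(0,1)$; it does \emph{not} depend on the decay of $u$ that you feed in. In the proof of Proposition~5.1 the prior decay is used only to discard the boundary term at $t=\infty$ when passing from $(38)$ to $(40)$. Once \emph{any} decay kills that term---polynomial decay already does, by Proposition~4.1---the argument outputs the \emph{same} differential inequality $(46)$ with the \emph{same} rate $2(1-\theta)^2 c_5$. Re-running the machine with exponential input therefore yields nothing new: $\eta_1(\lambda,a,b)$ is a ceiling built into the choice $\rho(r)=m\log r$, and there is no self-improvement loop. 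To raise the rate you would have to change $\rho$ itself (e.g.\ to $\rho(r)=\tfrac{\eta}{2}r+m\log r$), which alters every line from $(26)$ onward and introduces new terms of size $\eta\,\Delta r\sim \eta\widehat B_0/\sqrt r$; you have not carried out that computation, and nothing you wrote ties $(*_8)$ to the closing of such an iteration. Your observation that ``$c_5$ grows like $\sqrt\lambda$'' is correct but irrelevant, since $\lambda$ is fixed.

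The paper avoids bootstrapping entirely by taking the \emph{sub-exponential} weight $\rho(r)=kr^{\theta}$, $\theta\in(\tfrac12,1)$, directly in Proposition~3.3. Because $\theta<1$, the exponential decay from Proposition~5.1 (at \emph{any} positive rate) dominates $e^{kr^{\theta}}$ for \emph{every} $k\ge 1$, so the boundary terms at infinity vanish uniformly in $k$. The hypothesis $(*_8)$ is not a bootstrap condition: it is exactly the statement that the discriminant of the quadratic
\[
(-\widehat a-\varepsilon)\lambda-\Bigl(\widehat b_1+\tfrac{(\widehat B_0)^2}{4}\Bigr)y+(-\widehat a-\varepsilon)y^2
\]
in $y=k\theta r^{\theta-1}$ is negative, which makes the bulk integrand nonnegative for all $k\ge 1$ and all large $r$. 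One then obtains, for each fixed $s$, an inequality $k^2 I_1(s)+kI_2(s)+I_3(s)\ge 0$ valid for every $k\ge 1$, with leading coefficient $I_1(s)=-\theta^2 s^{2\theta-2}\int_{S(s)}|u|^2\,dA\le 0$; letting $k\to\infty$ forces $I_1(s)=0$, hence $u\equiv 0$ on $S(s)$ for all large $s$, and unique continuation finishes. A single weight with a free parameter $k\to\infty$ replaces your entire iteration-plus-Carleman scheme.
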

\begin{proof}
In proposition $3.3$, we shall put 
\begin{align*}
 & \gamma = 1 ;~
   \rho(r) = k r^{\theta }~
   \left( k\ge 1,~\theta \in \left(\frac{1}{2},1\right) \right) ;~ 
   \psi_1(r) = r \widehat{A}(r) + \varepsilon ; \\
 & \varepsilon = - 2 A_0 + 2a + \widehat{b} .
\end{align*}
Then, from
\begin{align}
 & - \frac{\partial (\Delta r)}{\partial r}
   = |\nabla dr|^2 + \mathrm{Ric}_g(\nabla r,\nabla r)
 \ge 
   - \frac{ \widehat{b}_1 + P_1(r) }{r} ; \tag{$*_{10}$} \\
 & r \widehat{A}(r) - \widehat{a} \le r \Delta r \le 
   r \widehat{A}(r) + \widehat{b} , \nonumber
\end{align}
we get
\begin{align}
 v = & e^{kr^{\theta }} u ;\\
 q = 
 & \lambda - \rho''(r) - \rho'(r) \Delta r + \left( \rho'(r) \right)^2 \\
 = 
 & \lambda + k \theta ( 1 - \theta ) r^{ \theta - 2 }
   - k \theta r^{\theta -1} \Delta r + k^2 \theta ^2 r^{ 2\theta - 2 } 
   \nonumber \\
 \ge 
 & \lambda 
   + k \theta \left( - \widehat{A}(r) + \frac{ 1 - \theta - \widehat{b} }{r} 
   \right) r^{ \theta - 1 } + k^2 \theta ^2 r^{ 2\theta - 2 } ; \nonumber \\
   r \frac{\partial q}{\partial r} 
 = 
 & - k \theta ( 1 - \theta )( 2 - \theta ) r^{\theta - 2} 
   + k \theta ( 1 - \theta ) r^{\theta -1} \Delta r  \nonumber \\
 & - k \theta r^{\theta } \frac{\partial (\Delta r)}{\partial r} 
   - 2 k^2 \theta ^2 ( 1 - \theta ) r^{ 2\theta - 2 }  \nonumber \\
 \ge 
 & - k \theta ( 1 - \theta )( 2 - \theta ) r^{ \theta - 2} 
   + k \theta ( 1 - \theta ) \left\{ r \widehat{A}(r) - \widehat{a} \right\} 
   r^{ \theta - 2}  \nonumber \\
 & - k \theta r^{ \theta - 1 } 
   \left\{ \widehat{b}_1 + P_1(r) \right\} 
   - 2 k^2 \theta ^2 ( 1 - \theta ) r^{ 2 \theta - 2 }  \nonumber \\
 = 
 & - k \theta \left\{ \widehat{b}_1 + P_2(r, \theta ) \right\} r^{ \theta - 1 }
   - 2 k^2 \theta ^2 ( 1 - \theta ) r^{ 2 \theta - 2 } , \nonumber 
\end{align}
where we set
\begin{align}
  P_2( r, \theta ) 
  = 
  ( 1 - \theta ) \left( - \widehat{A}(r) 
  + \frac{ \widehat{a} +  2 - \theta }{r} \right) + P_1(r)
\end{align}
for simplicity. 
Note that $\lim_{r \to \infty} P_2( r, \theta ) = 0$ uniformly with respect to $\theta \in (\frac{1}{2},1)$. 
In addition, 
\begin{align}
  r \Delta r - \psi_1(r) 
  \ge 
  - \widehat{a} - \varepsilon 
  = 2(A_0 - a) - \widehat{a} - \widehat{b} > 0 . \tag{$27$}
\end{align}
Therefore, on $B(r,\infty)$ $(r>>1)$, 
\begin{align}
 & r \frac{\partial q}{\partial r} + q ( r \Delta r - \psi_1(r) ) \\
 \ge 
 & - k \theta 
   \left\{ \widehat{b}_1 + P_2( r, \theta ) \right\} 
   r^{ \theta - 1 }
   - 2 k^2 \theta ^2 ( 1 - \theta ) r^{ 2 \theta - 2 } \nonumber \\
 & + \left\{ \lambda 
   + k \theta \left( - \widehat{A}(r) + \frac{ 1 - \theta - \widehat{b} }{r} 
   \right) r^{ \theta - 1 } + k^2 \theta ^2 r^{ 2\theta - 2 } \right\} 
   \left( - \widehat{a} - \varepsilon \right) \nonumber \\
=& \left( - \widehat{a} - \varepsilon \right) \lambda 
   - k \theta r^{ \theta - 1 } 
   \left\{ \widehat{b}_1 + P_2(r,\theta ) + \widehat{A}(r) 
   - \frac{ 1 - \theta - \widehat{b} }{r} \right\} \nonumber \\
 & + k^2 \theta ^2 r^{ 2\theta - 2 } 
   \left\{ \left( - \widehat{a} - \varepsilon \right) 
   - 2 ( 1 - \theta ) \right\} \nonumber \\
=& \left( - \widehat{a} - \varepsilon \right) \lambda 
   - k \theta r^{ \theta - 1 } 
   \left\{ \widehat{b}_1 + P_3( r, \theta ) \right\}
   + k^2 \theta ^2 r^{ 2\theta - 2 } c_8 ,  \nonumber
\end{align}
where we set 
\begin{align*}
   P_3( r, \theta ) 
 & = P_2( r, \theta ) + \widehat{A}(r) - \frac{1 - \theta - \widehat{b}}{r};\\
   c_8 
 & = \left( - \widehat{a} - \varepsilon \right) - 2 ( 1 - \theta ).
\end{align*}
Besides, 
\begin{align}
   1 - \frac{1}{2} \left( r \Delta r - \psi_1(r) \right) + 2 r \rho '(r) 
 \ge
 & 1 - \frac{1}{2} \left( r \widehat{A}(r) + \widehat{b} - r \widehat{A}(r) 
   - \varepsilon \right) + 2 k \theta r^{\theta} \\
 = 
 & 2 k \theta r^{\theta} + c_9 ; \nonumber
\end{align}
and 
\begin{align}
 & \frac{1}{2} \bigl\{ 2 \rho '(r) ( 1 + \psi_1(r) ) + \psi_1'(r) \bigr\}
   \frac{\partial v}{\partial r}v \\
  =
 & \left\{ k \theta r^{ \theta -1 } 
   \left( 1 + r \widehat{A}(r) + \varepsilon \right) 
   + \frac{1}{2} \left( \widehat{A}(r) + r \widehat{A}'(r) \right) \right\} 
   \frac{\partial v}{\partial r}v \nonumber \\
 \ge 
 & - \left\{ 2 k \theta r^{\theta} + c_9 \right\} 
   \left( \frac{\partial v}{\partial r} \right)^2 \nonumber \\
 & - \frac{ \left\{ k \theta r^{ \theta -1 } 
   \left( 1 + r \widehat{A}(r) + \varepsilon \right) 
   + \frac{1}{2} \left( \widehat{A}(r) + r \widehat{A}'(r) \right) \right\}^2 }
   { 4( 2 k \theta r^{\theta} + c_9 ) } |v|^2 , \nonumber 
\end{align}
where, we set 
\begin{align*}
   c_9 = 1 - \frac{1}{2} \left( \widehat{b} - \varepsilon \right)
\end{align*}
for simplicity. 
Here, by using $(*_4)$, $(*_5)$, and $(*_{5'})$, the coefficient of the term $|v|^2$ of $(70)$ is bounded as follows:
\begin{align}
 & \frac{ \left\{ k \theta r^{ \theta -1 } 
   \left( 1 + r \widehat{A}(r) + \varepsilon \right) 
   + \frac{1}{2} \left( \widehat{A}(r) + r \widehat{A}'(r) \right) \right\}^2 }
   { 4( 2 k \theta r^{\theta} + c_9 ) } \\
\le 
 & \frac{1}{8} k \theta r^{ \theta } \left( \widehat{A}(r) \right)^2 
  + k \theta r^{ \theta - 1 } P_4(r) + P_5(r) \nonumber \\
\le 
 & \frac{ ( \widehat{B}_0 )^2}{8} k \theta r^{ \theta -1 } 
  + k \theta r^{ \theta - 1 } P_4(r) + P_5(r) , \nonumber 
\end{align}
where $P_4(r)$ and $P_5(r)$ are functions of $r$ which are independent of $k\ge 1$ and $\theta \in ( \frac{1}{2},1 )$ and satisfy $ \lim_{r \to \infty}P_4(r) = \lim_{r \to \infty}P_5(r) = 0$. 
Since $ - \widehat{a} - \varepsilon > 0 $, we can choose $ \theta_0 \in ( \frac{1}{2},1 ) $ so that 
\begin{align}
  c_8 = \left( - \widehat{a} - \varepsilon \right) - 2 ( 1 - \theta ) 
  \ge ( 1 - \theta ) \left( - \widehat{a} - \varepsilon \right) > 0 
\end{align}
for any $\theta \in ( \theta_0, 1)$. 
Then, from $(68)$, $(69)$, $(70)$, and $(71)$, we see that 
\begin{align}
 & \frac{1}{2} \left\{ r \frac{\partial q}{\partial r} 
   + q ( r \Delta r - \psi_1(r) ) \right\} |v|^2
   + \frac{1}{2} \bigl\{ 2 \rho '(r) ( 1 + \psi_1(r) ) + \psi_1'(r) \bigr\} 
   \frac{\partial v}{\partial r} v \\
 & \hspace{30mm} + \left\{ 1 - \frac{1}{2} \left( r \Delta r - \psi_1(r) \right)   + 2 r \rho '(r) \right\} 
   \left( \frac{\partial v}{\partial r} \right)^2 \nonumber \\
 \ge 
 & \frac{1}{2} \left\{ \left( - \widehat{a} - \varepsilon \right) \lambda 
   - k \theta r^{ \theta - 1 } 
   \left( \widehat{b}_1 + \frac{ (\widehat{B}_0)^2 }{4} + P_6(r) \right) 
   + k^2 \theta ^2 r^{ 2\theta - 2 } c_8 \right\} |v|^2 , \nonumber 
\end{align}
where a function $P_6(r)$ is independent of $k \ge 1$ and $\theta \in (\theta_0,1)$ and satisfies $\lim_{r \to \infty}P_6(r) = 0$. 
Now, consider the discriminant $D$ of the quadratic equation
\begin{align*}
  \left( - \widehat{a} - \varepsilon \right) \lambda 
  - \left( \widehat{b}_1 + \frac{ (\widehat{B}_0)^2 }{4} \right)y 
  + \left( - \widehat{a} - \varepsilon \right) y^2 = 0 .
\end{align*}
Then, $D < 0$ by our assumption $(*_8)$, and hence, there exist constants $r_9(\ge r_0)$ and $\theta_1 \in (\theta_0, 1)$ such that 
\begin{align*}
  \left( - \widehat{a} - \varepsilon \right) \lambda 
  - \left( \widehat{b}_1 + \frac{ (\widehat{B}_0)^2 }{4} + P_6(r) \right) y 
  + ( 1 - \theta ) \left( - \widehat{a} - \varepsilon \right) y^2 > 0
\end{align*}
for $r \ge r_9$, $\theta \in [\theta_1,1)$, and $y \in {\bf R}$. 
Therefore, the right hand side of $(73)$ is nonnegative for any $k\ge 1$, $r\ge r_9$, and $\theta \in [\theta_1, 1)$. 
In the sequel, we shall fix $\theta \in [\theta_1, 1)$. 
Thus, we have for any $k\ge 1$ and $t > s \ge r_9$
\begin{align}
 & \int_{S(t)} r \left\{ 
   \left( \frac{\partial v}{\partial r} \right)^2 + \frac{1}{2}q|v|^2 
   - \frac{1}{2}|\nabla v|^2 
   + \left( \frac{ 1 + \varepsilon }{2r} + \widehat{A}(r) \right) 
   \frac{\partial v}{\partial r}v \right\} \,dA \\
 & + \int_{S(s)} r \left\{ \frac{1}{2}|\nabla v|^2
   - \frac{1}{2}q|v|^2 - \left(\frac{\partial v}{\partial r}\right)^2
   - \left( \frac{ 1 + \varepsilon }{2r} + \widehat{A}(r) \right) 
   \frac{\partial v}{\partial r}v \right\} \,dA \ge 0. \nonumber 
\end{align}
In view of $(65)$, we see that Proposition $5.1$ implies that 
\begin{align*}
  \liminf_{t\to \infty} \int_{S(t)} r 
  \left\{ |\nabla v|^2 + |v|^2 \right\} \,dA = 0.
\end{align*}
Hence, substituting an appropriate divergent sequence $\{t_i\}$ for $t$ in $(74)$, and letting $t_i\to \infty$, we see that 
\begin{align}
  \int_{S(s)} r \left\{ \frac{1}{2}|\nabla v|^2
  - \frac{1}{2}q|v|^2 - \left(\frac{\partial v}{\partial r}\right)^2
  - \left( \frac{ 1 + \varepsilon }{2r} + \widehat{A}(r) \right) 
  \frac{\partial v}{\partial r}v \right\} \,dA \ge 0
\end{align}
for all $ s \ge r_7$ and $ k \ge 1$. 
On account of the facts
\begin{align*}
   \left( \frac{\partial v}{\partial r} \right)^2
   = 
 & \left\{ k^2 \theta^2 r^{ 2 \theta - 2 } |u|^2
   + 2k \theta r^{\theta -1} \frac{\partial u}{\partial r} u
   + \left( \frac{\partial u}{\partial r} \right)^2 \right\} 
   e^{2kr^{\theta}} , \\
   |\nabla v|^2
   = 
 & \left\{ k^2 \theta^2 r^{2\theta -2} |u|^2
   + 2k \theta r^{\theta -1} \frac{\partial u}{\partial r}u
   + |\nabla u|^2 \right\} e^{2kr^{\theta}},
\end{align*}
and $(66)$, the left hand side of $(75)$ is written as follows:
\begin{align*}
  \left\{ k^2 I_1(s) + k I_2(s) + I_3(s) \right\} e^{2kr^{\theta}},
\end{align*}
where 
\begin{align*}
   I_1(s) = - \theta^2 s^{ 2 \theta - 2 } \int_{S(s)} |u|^2 \,dA;
\end{align*}
$I_2(s)$ and $I_3(s)$ is independent of $k$. 
Thus, for any fixed $ s\ge r_9$, the inequality $ k^2 I_1(s) + k I_2(s) + I_3(s) \ge 0 $ holds for all $ k \ge 1$. 
Therefore, $ I_1(s) = 0 $ for any fixed $s\ge r_9$, that is, $u \equiv 0$ on $B( r_9, \infty)$. 
The unique continuation theorem implies that $ u \equiv 0$ on $E = M - \overline{U}$. 
\end{proof}

\noindent {\it Proof of Theorem $1.3$}\\
Proposition $6.1$ proves the first part of Theorem $1.3$. 
The second part of Theorem $1.3$ is proved as follows: 
assume that $2 > \widehat{a} + \widehat{b}$. 
If $\lambda > \lambda_1( 1, a, b, A_0, B_0, K_3, b_1 )$ is an eigenvalue of $ - \Delta $ and $u$ is an corresponding non-trivial eigenfunction, then $u,\nabla u \in L^2(M,dv_g)$, in particular, $u,\nabla u \in L^2(E,dv_g)$. 
However, the first part of Theorem $1.3$, which is just proved above, implies that 
\begin{align*}
   \liminf_{t \to \infty}~t \int_{S(t)} 
   \left\{ \left( \frac{\partial u}{\partial r} \right)^2 + |u|^2 \right\}
   \,dA \neq 0 .
\end{align*}
Therefore, there exist positive constants $c_{10}$ and $r_9$ such that 
\begin{align*}
   t \int_{S(t)} 
   \left\{ \left( \frac{\partial u}{\partial r} \right)^2 + |u|^2 \right\}
   \,dA > c_{10} \qquad {\rm for}~t \ge r_9.
\end{align*}
Hence, dividing the both sides of this inequality by $t$ and integrating it with respect to $t$ over $[r_9,\infty)$, we get
\begin{align*}
   \int_{B(r_9,\infty)} 
   \left\{ \left( \frac{\partial u}{\partial r} \right)^2 + |u|^2 \right\}
   \,dA > \int_{r_9}^{\infty} \frac{c_{10}}{t} = + \infty.
\end{align*}
This contradicts the fact that $u,\nabla u \in L^2(E,dv_g)$. 
Thus, we have proved the second part of Theorem $1.3$. 
\vspace{2mm}

\noindent {\it Proof of Theorem $1.1$}\\
Theorem $1.1$ follows from Theorem $1.3$ and the comparison theorem in Riemannian geometry. 
Kasue \cite{Kasue} is a good reference for the comparison theorem in Riemannian geometry. 
\section{Further discussion}
Applying the comparison theorem in Riemannian geometry, we obtain the following corollary from Theorem $1.3$:
\begin{cor}
Let $(M,g)$ be an $n$-dimensional noncompact complete Riemannian manifold and $E$ is an end of $M$ with radial coordinates. 
We denote $r = {\rm dist}(\partial E,*)$ on $E$. 
Assume that there exist positive constants $r_0,a,b$,and $A_0$ such that 
$$
  \frac{A_0}{r} \le A(r) \le  \frac{B(r)}{\sqrt{r}} \quad {\rm for}~ r\ge r_0;~2(A_0-a) > \widehat{a} + \widehat{b},
$$
where $B:[r_0,\infty) \to (0,\infty)$ is a positive-valued function satisfying $\lim_{t\to \infty}B(t) = 0$. 
Assume also that $|K(r)|=o(r^{-1})$ and 
\begin{align*}
   \left\{ A(r_0) - \frac{a}{r_0} \right\} \, \widetilde{g} 
   \le 
   \nabla & dr 
   \le 
   \left\{ A(r_0) + \frac{b}{r_0} \right\} \, \widetilde{g} \quad {on}~S(r_0);\\
   - \frac{b}{r} \left\{ 2A(r) - \frac{1-b}{r} \right\}
   \le 
   K_{{\rm rad.}} & - K(r) 
   \le \frac{a}{r}\left\{ 2A(r) - \frac{a+1}{r} \right\}
   \quad {on}~B(r_0,\infty).
\end{align*}
Then, $\sigma_{{\rm pp}}(-\Delta) = \emptyset$. 
\end{cor}

Substituting several functions for $f(r)$ in Theorem $1.3$, we get many examples of manifold with no eigenvalue. For example, letting $\frac{1}{2} < \alpha <1$ and $p>0$ be constants and substituting $f(r) = \exp \left\{ p(1-\alpha)r^{1-\alpha} \right\}$ in Theorem $1.3$, we get the following:
\begin{cor}
Let $(M,g)$ be an $n$-dimensional noncompact complete Riemannian manifold and $E$ is an end of $M$ with radial coordinates. 
We denote $r = {\rm dist}(\partial E,*)$ on $E$. 
Assume that there exist positive constants $r_0$, $p$, $\frac{1}{2} < \alpha <1$, and $b>1$ such that the following hold$:$
\begin{align*}
   \frac{p}{(r_0)^{\alpha}} \, \widetilde{g} 
  & \le 
   \nabla dr 
   \le 
   \left\{ \frac{p}{(r_0)^{\alpha}} + \frac{b}{r_0} \right\}\, \widetilde{g} \qquad {on}~S(r_0);\\
   -\frac{p(2b-\alpha)}{r^{\alpha +1}} 
  & \le 
   K_{{\rm rad.}} + \frac{p^2}{r^{2\alpha}} 
   \le \frac{p\alpha}{r^{\alpha +1}}
   \qquad {on}~B(r_0,\infty).
\end{align*}
Then, $\sigma_{{\rm pp}}(-\Delta)=\emptyset$. 
\end{cor}


\vspace{3mm}

\end{document}